\def\R{\mathbb{R}}
\def\T{\mathbb{T}}
\newtheorem*{thm B}{Theorem B}
\newtheorem{theorem}{Theorem}
\newtheorem{question}{Question}
\newtheorem{cor-thm}{Corollary}[theorem]
\newtheorem{prop}{Proposition}[subsection]
\newtheorem{lemma}{Lemma}[subsection]
\newtheorem{cor}{Corollary}[section]
\newtheorem{rmk}{Remark}[subsection]
\DeclareMathOperator{\diag}{\mathrm{diag}}
\begin{document}
\title[New classes of $C^1$-robustly transitive]
{New classes of $C^1$-robustly transitive maps with persistent critical points}

%

 \email{clizana@ufba.br} 
 \email{wagnerranter@im.ufal.br}


\maketitle

\centerline{\scshape Cristina Lizana}
\medskip
{\footnotesize
 \centerline{ Departamento de Matem\'{a}tica. Instituto de Matem\'{a}tica
	e Estat\'{i}stica.}
	 \centerline{Universidade Federal da Bahia. Av. Adhemar de Barros s/n, 40170-110. 
	Salvador/Bahia, Brazil.}
} 

\medskip

\centerline{\scshape Wagner Ranter}
\medskip
{\footnotesize
 \centerline{Departamento de Matem\'{a}tica. Universidade Federal de Alagoas.}
   \centerline{Campus A.S. Simoes s/n, 57072-090. Macei\'o/Alagoas, Brazil.}
  \centerline{Postdoctoral fellow at Math Section of ICTP. Strada Costiera, 11. I-34151, Trieste, Italy.}
}

\bigskip

\begin{abstract}
We exhibit a new large class of $C^1$-open examples of robustly transitive maps displaying persistent critical points in the homotopy class of expanding endomorphisms acting on the two dimensional torus and the Klein bottle.	
\end{abstract}

\section*{Introduction}\label{intro}
\let\thefootnote\relax\footnotetext{AMS classification:  37D30, 37D20, 08A35, 35B38. Key words: robustly transitive endomorphisms, critical set.}

A map is \textit{$C^1$-robustly transitive} if there is a dense forward orbit, that is \textit{transitivity},
for every map in a $C^1$-neighborhood. This issue has been focus of attention by several authors.

For diffeomorphisms, it is well-known that the existence of $C^1$-robustly transitive maps requires some weak hyperbolicity structure. For further details see \cite{Mane-Closinglemma, DPU}, and \cite{BDP}. It is  widely known that in dimension two this structure imply topological obstructions. For instance, the only surface supporting a $C^1$-robustly transitive diffeomorphism is the torus, and it must be homotopic to a hyperbolic linear diffeomorphism.

For  non-invertible maps (\textit{endomorphisms}), on one hand \cite{LP} showed that any weak form of hyperbolicity is needed for the existence of $C^1$-robustly transitive local diffeomorphisms (covering maps). On the other hand, we exhibit in \cite{CW1} some topological obstructions for the  existence of $C^1$-robustly transitive surface endomorphisms displaying \textit{critical points}, that is points such that the derivative is not surjective. More precisely, it was proved in \cite{CW1} that if $F$ is a robustly transitive surface endomorphism displaying critical points, then $F$ has some weak form of hiperbolicity, namely \textit{partially hyperbolic}, which, roughly speaking, means that there exists a family of cone field on the tangent bundle which is preserved and its vectors are expanded by the derivative.
Consequently, one has that the only surfaces supporting $C^1$-robustly transitive endomorphisms are the Torus and the Klein bottle. Furthermore, it is shown that the action of every $C^1$-robustly transitive endomorphism on the first homology group has at least one eigenvalue of modulus greater than one. In other words, every $C^1$-robustly transitive surface endomorphism is homotopic to a linear endomorphisms having  at least one eigenvalue of modulus greater than one. In particular, there are  not $C^1$-robustly transitive endomorphisms homotopic to the  identity. Hence, some natural questions arise. 

\begin{question}\label{q1}
	What are the homotopy classes admitting robustly transitive endomorphisms?
\end{question}

Since some examples have been constructed recently in the 2-torus, another question is the following.

\begin{question}\label{q2}
	Does the Klein bottle admit robustly transitive maps?
\end{question}

Before doing some comments about the state of the art related to these questions, let us fix some notations. Throughout this paper $\mathrm{End}^1(M)$ denotes the set of all endomorphisms over $M,$ where $M$ is either the torus $\mathbb{T}^2$ or the Klein bottle $\mathbb{K}^2$, and the set of all critical points of an endomorphism $F$ will be denoted by $\mathrm{Cr}(F)$. Consider the linear endomorphism induced by the matrix $L$ with all integers entries, which, by slight abuse of notation, we denote also by $L$, and $\mu, \lambda \in \mathbb{R}$ its eigenvalues. 

For endomorphisms in $\mathrm{End}^1(\mathbb{T}^2)$ without critical points, besides the classical examples as linear hyperbolic endomorphisms and expanding endomorphisms which have some hyperbolic structures, some examples appear in \cite{Sumi, He-Gan} and \cite{LP}. However, these examples cannot be extended to obtain robustly transitive endomorphisms displaying critical points. The problem is, up to a perturbed, under the existence of critical points, some open sets are sent onto a curve; and the classical argument for proving the robust transitivity via open set, that is given two open set there is an iterate of one of them intersecting the other one, could failed in the $C^1$-topology under the existence of critical points.

The first example of robustly transitive endomorphisms displaying critical points was given in \cite{Berger-Rovella}, this example is homotopic to a hyperbolic linear endomorphism, that is, it is in the homotopy class of $L$ with $|\mu|<1<|\lambda|$. Later,  \cite{ILP} exhibited new classes of examples which are in the homotopy class of a linear expanding endomorphism $L$.
Unfortunately, there is a mistake in the proof for showing the robust transitivity of the constructed map in the homology class of $L$ with $1<|\mu|<|\lambda|$. Let us be more precise. In the proof of Proposition 2.1 in \cite{ILP}, they start with a linear endomorphism $L$ with $1<|\mu|<|\lambda|$ and obtain  $F \in \mathrm{End}^1(\mathbb{T}^2)$ a perturbation $C^0$-close to $L$, but $C^1$-far from $L$, having persistent critical points and preserving the unstable cone field naturally defined for $L$. However, in order to prove the robust transitivity of $F$, they use the fact that all images of any open set by $F$ has nonempty interior, which is not always satisfied for endomorphisms displaying critical points as we commented above. 

One of the goal of this paper is to be a corrigendum of \cite{ILP} and come to provide that such examples can in fact be constructed. Our approach is slightly bit different from the one used in \cite{ILP}. Let us state the first main result, for this consider $L$ a linear endomorphism defined on the torus.

\begin{theorem}\label{thm A}
Let $L$ be a linear endomorphism whose eigenvalues are $\mu, \lambda \in \mathbb{R}$ so that $1<|\mu|\ll|\lambda|$. Then there are $C^1$-robustly transitive endomorphisms homotopic to $L$ displaying critical points that are persistent under small perturbations.
\end{theorem}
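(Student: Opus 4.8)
The plan is to construct $F$ as a local (non-injective) perturbation of the linear map $L$ supported in a small ball, designed so that $F$ has persistent critical points while retaining enough hyperbolic structure to force robust transitivity. Start with $L$ having eigenvalues $1<|\mu|\ll|\lambda|$, with coordinates $(x,y)$ adapted to the eigendirections $E^u_\lambda$ (strong unstable) and $E^u_\mu$ (weak unstable). Inside a small ball $B$ whose forward images we can control, modify $L$ in the weak direction only: replace the affine map $y\mapsto \mu y$ by a map with a genuine fold (a critical curve transverse to $E^u_\mu$), chosen $C^0$-close to $L$ but necessarily $C^1$-far from it. The resulting $F\in\mathrm{End}^1(\mathbb T^2)$ and all nearby maps then have a nonempty, persistent critical set $\mathrm{Cr}(F)$. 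The key design constraint, exactly as in the earlier literature, is that $F$ must still preserve the strong unstable cone field $\mathcal C^u$ around $E^u_\lambda$ (the fold only damages expansion in the weak direction, which does not belong to $\mathcal C^u$), so by the result quoted from \cite{CW1} the map is partially hyperbolic and, crucially, it uniformly expands vectors in $\mathcal C^u$; moreover $F$ admits strong unstable curves tangent to $\mathcal C^u$ through every point, and these grow exponentially under iteration.

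With this structure in hand the transitivity argument should not be run through "images of open sets have nonempty interior" — precisely the step that fails in \cite{ILP} — but through strong unstable curves, which are one-dimensional and therefore immune to the collapsing caused by critical points. Concretely: (1) show every strong unstable curve, upon iteration by $F$, grows in length and, after finitely many steps, becomes $\varepsilon$-dense in $M$ (a standard bounded-distortion / expansion argument once one checks that the fold region is small enough not to obstruct the growth, i.e. a strong unstable curve only spends a controlled fraction of its length near $\mathrm{Cr}(F)$); (2) show every open set $U$ contains a strong unstable curve of definite size, hence some forward image $F^n(U)$ contains an $\varepsilon$-dense strong unstable curve, hence $F^n(U)$ is $\varepsilon$-dense in $M$; (3) given two open sets $U,V$, pick $\varepsilon$ smaller than the inradius of $V$, so $F^n(U)\cap V\neq\emptyset$. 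This yields transitivity, and since every item of the argument — cone preservation, uniform expansion in $\mathcal C^u$, smallness of the fold region, existence of unstable curves in open sets — is an open condition in the $C^1$-topology, it yields $C^1$-robust transitivity. One should also verify the homotopy class is unchanged, which is immediate since $F$ is $C^0$-close to $L$ and the perturbation is supported in a ball.

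The main obstacle I expect is step (1): controlling the growth of strong unstable curves \emph{through} the fold. A strong unstable curve can be tangent to $\mathcal C^u$ yet repeatedly re-enter the perturbation region $B$, and each passage through the neighborhood of $\mathrm{Cr}(F)$ is where the map fails to be a local diffeomorphism transverse to the curve — one must argue that the strong unstable direction is \emph{never} in the kernel of $DF$ (this is what cone preservation buys, if set up correctly), so the image of a strong unstable curve is again a genuine curve of comparable-or-larger length, never a point. This requires choosing the fold so that its critical curve is uniformly transverse to $\mathcal C^u$, and it is the place where the hypothesis $|\mu|\ll|\lambda|$ is used: the strong expansion $|\lambda|$ must dominate both the loss of weak expansion near the fold and the distortion it introduces, leaving a net exponential gain in length for curves tangent to $\mathcal C^u$. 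Establishing a clean quantitative version of "the fold region is traversed with controlled cost" — essentially a bounded-distortion estimate for $F$ restricted to strong unstable curves, uniform over a $C^1$-neighborhood — is the technical heart of the argument; everything else is a fairly standard minimality-of-unstable-foliation scheme adapted to the non-invertible setting.
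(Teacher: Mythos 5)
Your construction of the critical points (a fold supported in a small ball away from the rest of the mechanism, preserving the strong unstable cone field, with persistence of $\mathrm{Cr}(F)$ because the perturbation produces a genuine sign change of $\det DF$) is essentially the paper's. The genuine gap is your step (1): the claim that every curve tangent to $\mathcal{C}^u$ becomes $\varepsilon$-dense after finitely many iterates, robustly, does not follow from uniform expansion in the cone plus bounded distortion. Exponential growth of length says nothing about how the curve distributes over $\mathbb{T}^2$. In the periodic case (integer eigenvalues) the strong unstable leaves are vertical circles, and the forward image of a vertical segment $\{x\}\times J$ under the skew product is again a vertical segment sitting over the image of $x$ by the fibre maps; no amount of expansion in the vertical direction makes such a curve dense. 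Density has to come from spreading in the weak direction, and that is precisely what you cannot extract from open sets once critical points are present -- the very failure of the argument in \cite{ILP} that you flag. Even in the irrational case, where the leaves of $\mathcal{F}^u$ of $L$ are dense, density of unstable curves is not a priori stable under $C^1$-perturbation; making that kind of ``thickness'' robust is exactly what a blender is for, and your scheme contains no substitute for it. The bounded-distortion estimate you identify as the technical heart only controls length growth, which was never the obstruction.

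The paper's route is built around a blender plus a saddle point. One deforms $L$ in the weak direction so that over a box $\mathcal{Q}=I\times J$ the map is a skew product whose fibre maps $f_1,f_2$ (together with $f_0(x)=\mu x$) form an IFS with two contracting branches; this gives a hyperbolic saddle $p$ such that, robustly, the closure of $\mathcal{W}^u(p_G,G)$ has nonempty interior and a large iterate of it covers $\mathbb{T}^2$. The fibre maps are moreover chosen with affine branches $\alpha_i x+\beta_i$ whose translation parts have dense $f_0$-orbit, so that every $u$-arc has an iterate meeting $\mathcal{W}^s_{loc}(p_G,G)$, again robustly (in the irrational case this role is played by minimality of $\mathcal{F}^u$ together with a cycle created by a translation-type perturbation and a Derived-from-Anosov step). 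Transitivity of every nearby $G$ then follows through the saddle: a $u$-arc in $U$ hits $\mathcal{W}^s_{loc}(p_G,G)$, its forward images accumulate on $\mathcal{W}^u_{loc}(p_G,G)$ by an inclination-lemma argument, and therefore cross a ``horizontal'' arc contained in a preimage of $V$ that meets $\mathcal{W}^u_{loc}(p_G,G)$; the critical points are inserted far from $\mathcal{Q}$ so all of this survives. To salvage your proposal you would need a robust mechanism forcing iterates of every $u$-arc to spread in the weak direction and to retain that property for all $C^1$-nearby maps -- which is, in effect, re-inventing the blender.
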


The condition $1<|\mu|\ll |\lambda|$ means that $|\lambda|$ is larger enough than $|\mu|$ 
such that the correspondent eigenspaces generate a dominated splitting
. This will be useful to create a region with a ``good mixing'' property which is persistent under small perturbations, so-called ``blender''.

The proof is divided in two parts. The first part corresponds to the case the eigenvalues $\lambda$ and $\mu$ are integer numbers, which we call Periodic case, see Section~\ref{periodic}. The second one corresponds to the case  the eigenvalues are irrational, so-called Irrational case, see Section~\ref{IC}.

Roughly speaking the idea of the construction is as follows. Start with an expanding linear endomorphism $L$ with $1<|\mu|\ll|\lambda|$. Make a deformation in the weaker direction in order to get a ``blender''.  The new homotopic map is robustly transitive endomorphism.
Then, create artificially critical points far away from the ``blender'', which are persistent under small $C^1$-perturbations, in such a way that the transitivity.

\smallskip

The blenders were first introduced in \cite{BD} as a mechanism to create robustly transitive non-hyperbolic diffeomorphisms. Blenders were used in \cite{He-Gan} and \cite{LP} to create robustly transitive non-hyperbolic local diffeomorphisms on the 2-torus. The definition of blender given in \cite{He-Gan} is quite technical. There are several authors using the knowledge of blender in the non-invertible setting, for instance see \cite{Berger}.  We are assuming the notion of blender as follows, which  is an adaptation of the definition introduced in \cite{He-Gan}, doing the consideration that  for us the stable and unstable directions are on the contrary as it is in \cite{He-Gan}, that is, for us the stable direction is horizontally and the unstable direction is  vertical.

\smallskip

Let $I, J$ two closed intervals in $S^1.$ Consider a rectangle $\mathcal{Q}=I\times J$ in $\mathbb{T}^2$ and $F:\mathcal{Q}\to \mathbb{T}^2$
a $C^1$ map. 
$(\mathcal{Q}, F)$ is a {\it blender} if there are two compact subsets $R_1$ and $R_2$ of $\mathcal{Q}$ such that
$F|_{R_i}$ is a diffeomorphism onto its image, for $i=1,\,2$, and $\mathcal{Q}\subseteq F(R_1)\cup F(R_2)$  verifying:

\begin{enumerate}[label=(\alph*)]
	\item[(B1)]  $F$ is hyperbolic on $R_i$, with $i=1,\, 2$. That is, 
	denoting by $E^s$ and $E^u$ the tangent bundle to $I$ and $J$, respectively, for all $(x,y) \in \mathcal{Q}$  one has that  the unstable cone field $$\mathcal{C}^u_{\alpha}(x,y)=\{u+v \in E^s(x,y) \oplus E^u(x,y): \|u\| \leq \alpha \|v\|\},$$
	for $\alpha >0$ small, satisfies:
	
	\smallskip
	
	\begin{enumerate}
		\item[($i$)]  $\overline{DF(\mathcal{C}^u_{\alpha}(x,y))} \subseteq \mathrm{int}(\mathcal{C}^u_{\alpha}(F(x,y)))$; 
		\item[($ii$)] $T_{(x,y)}\mathbb{T}^2= E^s(x,y) \oplus \mathcal{C}^u_{\alpha}(x,y)$; 
		\item[($iii$)] there exists $1< \sigma < \lambda$ such that
		\begin{align}
		\|DF(u)\|<\sigma^{-1}\|u\| \ \ \text{and} \ \ \|DF(w)\|\geq \sigma \|w\|,
		\end{align}
		for all $u \in E^s(x,y)$ and $w \in \mathcal{C}^u_{\alpha}(x,y)$;
	\end{enumerate}
	\item[(B2)] there exist two \textit{u-arc} $\gamma_1$ and $\gamma_2$ (that is, $\gamma_i'$ is contained in $\mathcal{C}^u_{\alpha}$) of $\mathcal{Q}$ such that  $F(\gamma_1\cap R_i)\supseteq \gamma_i$ and $\gamma_2$ is strictly on one side of $\gamma_1$;
	\item[(B3)] let $V_1$ be the closed subset of $\mathcal{Q}$ between $\gamma_1$ and $\gamma_2$, $V_2$ a closed subset of $\mathcal{Q}\setminus V_1$ such that $\gamma_2$ is part of the boundary of $V_2$ and
	$R_i'=(R_i\cap V_1)\cup (R_i\cap V_2)$ subset of $\mathcal{Q},$ for $i=1,\,2,$ then $F(R_i')$ contains $V_i,\, i=1,\, 2$.
\end{enumerate}

\smallskip

Note that since $F(\gamma_1 \cap R_1) \supseteq \gamma_1$, there exists a hyperbolic saddle fixed point $p_F\in \gamma_1$.  
In \cite[Proposition 3.2]{He-Gan} is proved that there exists a neighborhood $\mathcal{U}_F$ of $F$ so that $(\mathcal{Q}, G)$ is a blender for every $G \in \mathcal{U}_F$. Furthermore, the  closure of the unstable manifold $\mathcal{W}^u(p_G,G)=\cup_{n\geq 0}G^n(\mathcal{W}^u_{loc}(p_G,G))$ has nonempty interior, where $p_G$ is the continuation of $p_F$.

From the proof of Theorem \ref{thm A}, we are able to construct on the Klein bottle a $C^1$-robustly transitive endomorphism having critical points that are persistent under small perturbations, answering Question~\ref{q2} above affirmatively getting the second goal of this work.

Before state next result, let us make some comments  about the construction of the examples on the Klein bottle. Let $\alpha, \beta : \mathbb{R}^2 \to \mathbb{R}^2$ defined by $\alpha(x,y)=(x+1,y), \beta(x,y)=(-x,y+1),$ and $\Gamma$ be the group of self-homeomorphisms of $\mathbb{R}^2$ generated by $\alpha$ and $\beta$. The {\it{Klein bottle}} is defined as the quotient space given by $\mathbb{K}^2=\Gamma \backslash \mathbb{R}^2$.
Consider the diagonal matrix $L=\diag(\mu,\lambda)$, where $\mu, \lambda$ are nonzero integers and $\lambda$ is odd. This matrix induces a linear endomorphism on the Klein bottle, so-called $L$, given by $L[x,y]=[\mu x, \lambda y], \forall [x,y] \in \mathbb{K}^2$. For further details see \cite{JKM}.

We now are able to state the result.


\begin{theorem}\label{thm B}
Suppose that $1< |\mu|\ll |\lambda|$. Then, there is $C^1$-robustly transitive endomorphism on $\mathbb{K}^2$ displaying (persistent) critical points homotopic to a linear endomorphisms $L$. 
\end{theorem}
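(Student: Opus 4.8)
The plan is to derive Theorem~\ref{thm B} from the construction used to prove Theorem~\ref{thm A}, by lifting the torus example to the Klein bottle via the natural double cover. Recall that $\mathbb{K}^2 = \Gamma\backslash\mathbb{R}^2$ with $\alpha(x,y)=(x+1,y)$, $\beta(x,y)=(-x,y+1)$, and that the subgroup generated by $\alpha$ and $\beta^2$ is $\mathbb{Z}^2$, so the torus $\mathbb{T}^2$ is a double cover of $\mathbb{K}^2$ with deck transformation induced by $\beta$, namely the involution $\iota[x,y]=[-x,y+1]$. The first step is therefore to observe that an endomorphism $G$ of $\mathbb{K}^2$ homotopic to $L=\diag(\mu,\lambda)$ (with $\lambda$ odd, $1<|\mu|\ll|\lambda|$) lifts to an endomorphism $\widetilde G$ of $\mathbb{T}^2$ homotopic to $\widetilde L=\diag(\mu,\lambda)$ that is equivariant with respect to $\iota$, i.e. $\widetilde G\circ\iota=\iota\circ\widetilde G$; conversely an $\iota$-equivariant endomorphism of $\mathbb{T}^2$ descends to $\mathbb{K}^2$. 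The oddness of $\lambda$ is exactly what makes $\widetilde L$ commute with $\iota$, so this is the arithmetic reason Theorem~\ref{thm B} requires $\lambda$ odd.

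The second step is to revisit the proof of Theorem~\ref{thm A} and arrange all the modifications of $\widetilde L$ to be $\iota$-equivariant. Concretely: the blender region $\mathcal{Q}=I\times J$ and the deformation in the weak (horizontal) direction that creates it should be placed so that $\mathcal Q$ and its image $\iota(\mathcal Q)$ are either disjoint or the construction on one is the $\iota$-image of the construction on the other; likewise the artificially created critical points, which in the proof of Theorem~\ref{thm A} are placed far from the blender, should be chosen in an $\iota$-invariant finite union of small disks, with the bump-type perturbation that creates the fold symmetrized under $\iota$. Since $\iota$ is an isometry of $\mathbb{R}^2$ (hence of the flat $\mathbb{T}^2$) preserving the horizontal and vertical foliations, it preserves the relevant cone field $\mathcal{C}^u_\alpha$, so the cone conditions (B1)(i)--(iii), (B2), (B3) are inherited by the symmetrized map; the persistence of the critical points and of the blender (via \cite[Proposition 3.2]{He-Gan} applied on $\mathbb{T}^2$) are $C^1$-open conditions and survive descent because $C^1$-small perturbations of $G$ on $\mathbb{K}^2$ lift to $C^1$-small $\iota$-equivariant perturbations of $\widetilde G$ on $\mathbb{T}^2$.

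The third step is transitivity. We obtain from the (corrected) argument of Theorem~\ref{thm A} that $\widetilde G$ and every $C^1$-small perturbation of it is transitive on $\mathbb{T}^2$; in fact the argument yields more, namely that $\overline{\mathcal{W}^u(p_{\widetilde G},\widetilde G)}=\mathbb{T}^2$ where $p_{\widetilde G}$ is the blender saddle. A transitive (indeed topologically mixing on a full-measure set, or with a dense unstable set) map on $\mathbb{T}^2$ that is $\iota$-equivariant projects to a transitive map on $\mathbb{K}^2=\mathbb{T}^2/\iota$: any dense orbit, or any point whose forward orbit closure is everything, projects to such a point, since the quotient map $\pi:\mathbb{T}^2\to\mathbb{K}^2$ is a continuous surjection semiconjugating $\widetilde G$ to $G$ and open. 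One must check that the dense orbit (or the dense unstable manifold) one produces is not accidentally contained in the fixed-point set of $\iota$ — but $\iota$ is free, so there is nothing to check; density on $\mathbb{T}^2$ immediately forces density on the quotient. Finally, these properties are $C^1$-robust on the $\mathbb{K}^2$ side: a $C^1$-neighborhood $\mathcal V$ of $G$ in $\mathrm{End}^1(\mathbb{K}^2)$ pulls back to a $C^1$-neighborhood of $\widetilde G$ inside the $\iota$-equivariant endomorphisms, each element of which is transitive and has persistent critical points, hence so is each element of $\mathcal V$.

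I expect the main obstacle to be purely bookkeeping: making the deformation in Theorem~\ref{thm A} genuinely symmetric under $\iota$ without destroying any of the blender inequalities or the location constraints ("critical points far from the blender"). In particular one has to be careful that placing $\mathcal Q$ away from the fixed-point-free involution's "diagonal" is possible (it is, since $\iota$ has no fixed points and moves the $y$-coordinate by $1$, so a thin rectangle near $y=0$ has its $\iota$-image near $y=1$, disjoint from it after choosing $J$ short), and similarly for the critical disks. Once equivariance is set up, descent of transitivity, of persistence of critical points, and of $C^1$-robustness are all formal consequences of the covering-space picture and the openness statements already available.
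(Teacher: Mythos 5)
Your proof is correct but routes through the double cover $\mathbb{T}^2\to\mathbb{K}^2$, whereas the paper (whose proof is deliberately omitted as ``basically the same as for the torus case'') clearly intends the skew product construction of Section~\ref{periodic} to be carried out \emph{directly} on $\mathbb{K}^2$: one keeps $F(x,y)=(f_y(x),\lambda y\ \mathrm{mod}\,1)$ and notes that descent to $\mathbb{K}^2$ requires $\lambda$ odd and the equivariance $f_{y+1}(-x)=-f_y(x)$, which is automatic for $f_0(x)=\mu x$ and is arranged by placing the deformation region $J$ and the critical-point box inside a single fundamental strip in $y$. Your approach instead lifts to the index-two torus cover, redoes the Theorem~\ref{thm A} construction $\iota$-equivariantly, and pushes forward transitivity and robustness through the covering map; the key observation that perturbations in $\mathrm{End}^1(\mathbb{K}^2)$ lift to $\iota$-equivariant $C^1$-small perturbations on $\mathbb{T}^2$ lets you invoke the full strength of Theorem~\ref{thm A}'s robust transitivity without re-verifying the lemmas on $\mathbb{K}^2$. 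That is a genuine advantage of your route, at the price of one extra consistency check that you gloss over a little: the $\iota$-symmetrized map on $\mathbb{T}^2$ has additional perturbation regions (namely $\iota(\mathcal{Q})$, $\iota$ of the critical disks, and the $\sigma$-conjugate one-dimensional maps $\sigma f_i\sigma$ with $\sigma(x)=-x$), so one should note that the IFS/crossing-the-blender argument of Lemmas~\ref{lemma:unstable}--\ref{stable intersects} still applies --- this is fine because the orbit of $\{x\}\times J$ under the symmetrized $\widetilde G$ still realizes every word in $\langle f_0,f_1,f_2\rangle$, and the extra deformations occur away from $\mathcal{Q}$ and preserve the same cone field, but it deserves a sentence. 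Also note that, given Theorem~\ref{thm A} as a black box, you do not even need the perturbations of $\widetilde G$ to be equivariant: every $C^1$-small perturbation of $\widetilde G$ on $\mathbb{T}^2$ is transitive, and in particular the equivariant lifts of perturbations of $G$ are, which is all you use.
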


The proof is omitted since it is basically the same as for the torus case, 
see  Section~\ref{periodic}.
We emphasize that our approach requires a strong dominated condition 
 which is provided by the assumption $|\mu|\ll|\lambda|$. 
Some  cases remain open as follows. 

\begin{question}
Are there examples of $C^1$-robustly transitive endomorphism displaying critical points in the homotopy class
of a homothety?
\end{question}

\begin{question}
	Can a zero degree endomorphism be $C^1$-robustly transitive?
\end{question}

We believe that both questions can be answer affirmatively. For further discussions about this issue we suggest the readers to see \cite{CW1}. The paper is organized as follows. Section~\ref{preliminaries} is devoted to the basic notions and the prototype for the example. Section~\ref{construction} is dedicated to the construction of the new large class of example satisfying Theorem \ref{thm A} for both cases.


\section{Preliminaries}\label{preliminaries}

Throughout this section, we introduce some essential notation and terminologies that will be used for proving  Theorem~\ref{thm A}.

\subsection{Iterated Function Systems - IFS} 
Given $f_1, \dots, f_{l}:I \to I$ orientation preserving  maps on an interval $I$, not necessarily invertible, we defined as IFS of $f_1,\dots,f_l$ the set of all possible finite compositions of $f_i$'s, that is, $$<f_1,\dots, f_l>:=\{h=f_{i_m}\circ\dots\circ f_{i_1}:\; i_{k}\in\{1,\dots,l\},\, 1\leq k \leq m,\, \mbox{and}\ \ m\geq 0\}.$$ The \textit{orbit} of $x$ is given by $\mathcal{O}(x):=\{h(x):\, h \in\, <f_1,\dots, f_l> \}$, and the length $|h|$ of the word $h$ by $m$ if $ h=f_{i_m}\circ\dots\circ f_{i_1}$.
\subsection{Prototypes}\label{sec:prototype}
By prototype, we mean a particular local feature of endomorphism which exhibits a blender. In our approach, we will deform the linear endomorphism in order to obtain locally one of the following prototypes.

Suppose that $F: \mathbb{T}^2 \to \mathbb{T}^2$ is an endomorphism admitting a box $\mathcal{Q}=I\times J \subset \mathbb{T}^2$, and two disjoint intervals $J_1$ and $J_2$ contained in $J$ so that 
$$F(x,y)=(f_i(x), \lambda y \,\, ({\rm{mod}}\,\, 1)), \forall (x,y) \in I\times J_i,$$
satisfying the following conditions:
\begin{enumerate}[label=(P.\arabic*)]
\item $\lambda >1$ such that $\lambda J_i:=\{\lambda y:y \in J_i\}\supset J$;
\item $f_1, \,f_2:I\to I$ are as one of the cases in Figures \ref{IFS-blender-1} and \ref{IFS-blender-2}.
\end{enumerate}

\begin{figure}[h!]
\begin{minipage}{0.45\linewidth}
\centering
\includegraphics[scale=0.6]{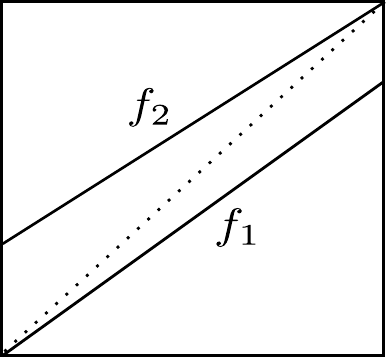}
\caption{Periodic case.}\label{IFS-blender-1}
\end{minipage}
\begin{minipage}{0.45\linewidth}
\centering
\includegraphics[scale=0.6]{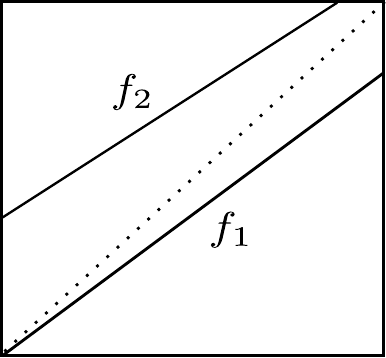}
\caption{Irrational case.}\label{IFS-blender-2}
\end{minipage}
\end{figure}

We claim that $(\mathcal{Q}, F)$ is a blender according to the definition given in the Introduction. 
In fact, for both cases (Figures \ref{IFS-blender-1} and \ref{IFS-blender-2}), we have that $F$ is hyperbolic on $R_i=I\times J_i$, with $i=1,\, 2$, since there exists $\alpha>0$ such that for all $(x,y) \in R_i, \, i=1,\,2$:
\begin{enumerate}[label=(\roman*)]
	\item $T_{(x,y)}\mathbb{T}^2=E^s(x,y)\oplus \mathcal{C}^u_{\alpha}(x,y)$;
	\item $DF(x,y)$ is the diagonal matrix $\diag(f_i(x),\lambda)$ on $E^s(x,y)\oplus E^u(x,y)$;
	\item there exists $1< \sigma < \lambda$ such that:
	\begin{align}
	|f_i'(x)|<\sigma^{-1} \ \ \text{and} \ \ \|DF_{(x,y)}(w)\|\geq \sigma, \, \forall w \in \mathcal{C}^u_{\alpha}(x,y).
	\end{align} 
\end{enumerate}
Then, item (B1) holds.
Moreover, the $u$-arcs $\gamma_1$ and $\gamma_2$ defined as $\{x_0\}\times J$ and $F(\{x_0\}\times J_2)$, respectively, with $x_0$ a fixed point for $f_1,$ satisfy item (B2). Item (B3) follows easily from the construction. That concludes the claim.

Recall that if $(\mathcal{Q}, F)$ is a blender then there is a saddle point $p_F$ such that the closure of the unstable manifold $\mathcal{W}^u(p_F)$ of $F$ has nonempty interior. Furthermore, there is a neighborhood $\mathcal{U}_F$ of $F$ in $\mathrm{End}^1(\mathbb{T}^2)$ so that for every $G \in \mathcal{U}_F$, we can see that the unstable manifold $\mathcal{W}^u(p_G)$ of $G$ at the continuation point $p_G$ of $p_F$ also has nonempty interior.

\begin{rmk}\label{s-mfld}
	Observe that the local stable manifold $\mathcal{W}^s_{loc}(p_F)$ for $F$ at the hyperbolic fixed saddle point $p_F$ contains the interval $I$. Recall that the local stable manifold $\mathcal{W}^s_{loc}(p_G,G)$ depends continuously on $G$ in $\mathcal{U}_F$. 
\end{rmk}


\section{Construction of the examples}\label{construction}
In this section, we are dedicated to prove Theorem \ref{thm A}. The proof is split into two parts. The first part, it is the periodic case, when the eigenvalues $\mu$ and $\lambda$ are integers. The second one, it is when $\mu$ and $\lambda$ are irrational. Recall that $\mu$ and $\lambda$ are eigenvalues of a square matrix $L$ with all integer entries.

Let us fix some notation and terminologies. Write $E^c$ and $E^u$ to denote the subspaces associated to the eigenvalues $\mu$ and $\lambda$, respectively. We denote $\mathcal{F}^c$ and $\mathcal{F}^u$ the foliations induced on the torus by the eigenspaces $E^c$ and $E^u$, called \textit{(weak) unstable foliation} and \textit{strong unstable foliation} respectively. The leaves of $\mathcal{F}^c$ and $\mathcal{F}^u$ are preserved by the linear endomorphism $L$. 

We define for each $\alpha >0$ and $p \in \mathbb{T}^2$ the \textit{unstable cone at point p} by:
\begin{align}\label{coneL}
\mathcal{C}_{\alpha}^u (p)=\{(u,v) \in E^c\oplus E^u: \  \|u\|\leq\alpha\|v\|\}.
\end{align}
It easy to see that for every $\alpha >0$, we have $L(\mathcal{C}_{\alpha}^u)$ is contained in $\mathcal{C}_{\mu \alpha/\lambda}^u$. 

\subsection{Periodic case}\label{periodic}
We here assume that $\mu$ and $\lambda$ are integers and that the modulus of $\lambda$ is sufficiently larger than the modulus of $\mu$ to provide the dominated structure. It easy to see that, in this case, the unstable and strong unstable foliations consist of closed curves. For the sake of simplicity,  we may assume without loss of generality that the leaves of $\mathcal{F}^c$ and $\mathcal{F}^u$ are \textit{horizontal and vertical closed curves} respectively. That is, the leaves of $\mathcal{F}^c$ and $\mathcal{F}^u$ are the following closed curves:
\begin{align}
\mathcal{F}^c(x,y)=\mathbb{S}^1 \times \{y\} \ \ \text{and} \ \ \mathcal{F}^u(x,y)=\{x\}\times \mathbb{S}^1, \forall (x,y) \in \mathbb{T}^2=\mathbb{R}^2/\mathbb{Z}^2.
\end{align}


\subsubsection{One-dimensional dynamics}\label{1dd}
Take $f_0:\mathbb{S}^1\to \mathbb{S}^1$ as $f_0(x)=\mu x \,\, (\mathrm{mod}\,1)$. For every $\varepsilon >0$ small enough, we can deform $f_0$ to obtain two maps $f_1$ and $f_2$ which restricted to the interval $(-\varepsilon, \varepsilon)\subseteq \mathbb{S}^1$ are contracting affine maps as in Figure~\ref{ISF-degree>1}.
\begin{figure}[!htb]
	\centering
	\includegraphics[scale=0.7]{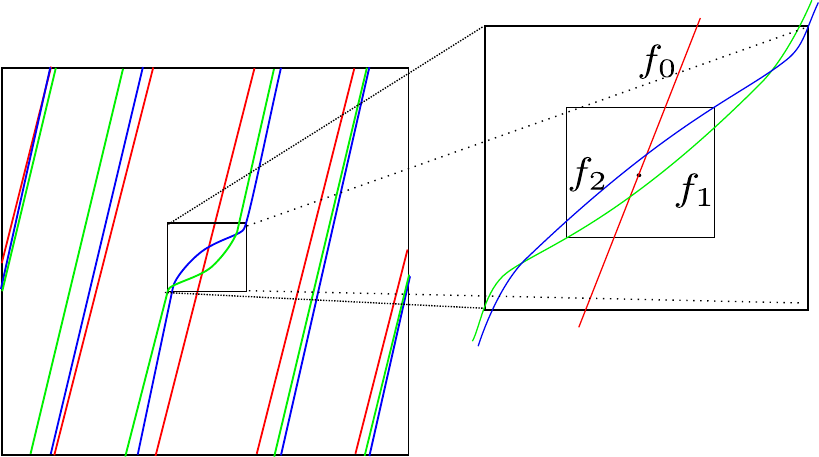} 
	\caption{The graphs of $f_1$ and $f_2$ with topological degree $\mu$.}
	\label{ISF-degree>1}
\end{figure}

\noindent
Furthermore, the maps $f_1$ and $f_2$ are orientation preserving $C^1$-local diffeomorphisms on $\mathbb{S}^1$ of degree $\mu$ and verify the following:

\begin{enumerate}[label=(\arabic*)]
	\item $f_i$ is $2\varepsilon \, C^0$-close to $f_0(x):=\mu x\, ({\rm{mod}}\,1)$ with $0<|\partial_x f_i|< 1$ in $(-\varepsilon,\varepsilon)$;
	\item there exist points in $\mathbb{S}^1$, $-\varepsilon< p_0^1<0< p_0^2<\varepsilon$, where $f_i$'s restricted to $I=[p_0^1,p_0^2]$ are affine maps and $p_0^i$ is an attractor fixed point of $f_i$, $i=1,2$;
	\item  $f_i$ has  two repeller fixed points  $p^i_{-}$ and $p^i_{+}$ in a neighborhood of the attractor $p^i_0$ satisfying $p^i_{-}<-2\varepsilon< p^i_0<2\varepsilon< p^i_{+}$.
\end{enumerate}

Note that $f_1$ and $f_2$ restricted to $I$ are as in Figure \ref{IFS-blender-1}.

\begin{enumerate}[resume]
\item the map $f_i(x)$ restricted to $\mathbb{S}^1\backslash (-2\varepsilon,2\varepsilon)$ is an affine function of the form $\alpha_i x +\beta_i$, where $\alpha_i$ is sufficiently close to $\mu$ and $\beta_i$ is chosen such that its forward orbit by $f_0,\, \mathcal{O}^{+}(\beta_i,f_0)=\{f_0^k(\beta_i):k\geq 0\}$, is dense in $\mathbb{S}^1$.
\end{enumerate}

We can assume $\partial_x f_i<\mu'$, for some $\mu'$ close to $\mu$ but $\mu'> \mu$, and $i=1,2$.

\subsubsection{Two-dimensional dynamics}\label{skewproduct}
We define $F:\mathbb{T}^2 \to \mathbb{T}^2$ as a skew product map $F(x,y)=(f_y(x),\lambda y \, \, (\mathrm{mod}\,\, 1))$, where $f_y:\mathbb{S}^1 \to \mathbb{S}^1$ is an homotopy map satisfying:
\begin{align}
f_y(x)=\left\{\begin{array}{ll}
f_i(x)&,\, (x,y) \in \mathcal{R}_i \ \ \text{for all} \,\, i=0,1,2;\\
f_0(x) &, \, (x,y) \in \mathbb{T}^2 \backslash \mathcal{R}_J,
\end{array}\right.
\end{align}
where $\mathcal{R}_J$ and $\mathcal{R}_i$ are horizontal stripes in $\mathbb{T}^2$ of the form $\mathbb{S}^1 \times J$ and $\mathbb{S}^1 \times J_i$, respectively; the interval $J$ in $\mathbb{S}^1$ centered at the origin (the fixed point of $f_0$), and $J_0,J_1$ and $J_2$ are three pairwise disjoint intervals contained in $J$ sufficiently small such that $\lambda J_i =\{\lambda x : x \in J_i\} \supseteq J$. Then, we should verify that $F$ restricted to the box $\mathcal{Q}=I\times J$, where $I=[p_0^1,p_0^2]$, is as the prototype in Figure \ref{IFS-blender-1} which implies that $(\mathcal{Q},F)$ is a blender.

These intervals $J_i$ can be easily chosen for large $\lambda$ ( e.g., $\lambda\geq 5$). We are interested to create  $C^1$-robustly transitive endomorphisms displaying critical points homotopic to expanding linear endomorphisms, but not interested to quantify exactly which $\lambda$ should be.

\subsubsection{Weak hyperbolicity}
Here we show that the skew product map $F$ defined in the Section \ref{skewproduct} has a hyperbolic structure. In order to prove that, we show that $F$ preserves the unstable cone field $\mathcal{C}_{\alpha}^u$ defined in \eqref{coneL}. 

We need some control on the derivative respective to the second variable for constructing an invariant family of cones. The following result shows that we may choose the homotopy $f_y$ so that $|\partial_y f_y|$ is sufficiently small.

\begin{prop}\label{homotopy}
	Given $\delta>0,$ there exists $\varepsilon>0$ such that for $f, g: \mathbb{S}^1\to \mathbb{S}^1$ $C^1$-maps
	of topological degree $\mu$
	which are  $2\varepsilon$ $C^0$-close, there exists an homotopy $H_t$ between $f$ and $g$ so that $|\partial_t H_t|<\delta.$
\end{prop}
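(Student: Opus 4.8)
The plan is to construct the homotopy explicitly as a linear interpolation, but performed on the level of lifts and corrected so as to keep the $t$-derivative small. Fix $\delta>0$. Since $f$ and $g$ have the same topological degree $\mu$, they admit lifts $\tilde f,\tilde g:\R\to\R$ with $\tilde f(x+1)=\tilde f(x)+\mu$ and $\tilde g(x+1)=\tilde g(x)+\mu$; moreover, if $f$ and $g$ are $2\varepsilon$ $C^0$-close on $\mathbb{S}^1$, one can choose these lifts so that $\|\tilde f-\tilde g\|_{C^0(\R)}<2\varepsilon$ (the difference $\tilde f-\tilde g$ is $1$-periodic because the degrees agree, so its sup over $\R$ equals its sup over a fundamental domain, which is controlled by the distance on $\mathbb{S}^1$ up to the obvious choice of branch).

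Next I would set $\tilde H_t(x) = (1-t)\tilde f(x) + t\,\tilde g(x)$ for $t\in[0,1]$. Each $\tilde H_t$ satisfies $\tilde H_t(x+1)=\tilde H_t(x)+\mu$, hence descends to a map $H_t:\mathbb{S}^1\to\mathbb{S}^1$ of degree $\mu$, with $H_0=f$ and $H_1=g$. The key computation is $\partial_t\tilde H_t(x) = \tilde g(x)-\tilde f(x)$, so
\begin{align}
\sup_{x,t}\,|\partial_t H_t(x)| \;=\; \|\tilde g-\tilde f\|_{C^0} \;<\; 2\varepsilon .
\end{align}
Thus choosing $\varepsilon < \delta/2$ (in fact any $\varepsilon \le \delta/2$ works, independently of $f$ and $g$) yields $|\partial_t H_t|<\delta$, which is exactly the claim. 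Note the quantifier structure matches the statement: $\varepsilon$ is produced first, depending only on $\delta$, and then the conclusion holds for every pair $f,g$ that are $2\varepsilon$-close of the prescribed degree.

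The only genuinely delicate point — and the one I would write out carefully — is the assertion that $C^0$-closeness of $f,g$ on $\mathbb{S}^1$ forces the existence of lifts that are uniformly $C^0$-close on all of $\R$. The subtlety is that a priori the lifts could differ by an integer translate on different fundamental domains; this is ruled out precisely because $\deg f=\deg g=\mu$ makes $\tilde f-\tilde g$ periodic, so once the branch is fixed on one fundamental domain it stays close everywhere. If one wants the resulting $H_t$ to be $C^1$ in $x$ for every $t$ (so that it is a genuine path of $C^1$-maps, as needed when plugging into the skew product), this is automatic since a convex combination of $C^1$ lifts is $C^1$; and if in addition one wanted joint smoothness or control of $\partial_x H_t$, that too follows from $\partial_x\tilde H_t=(1-t)\tilde f'+t\,\tilde g'$, though the statement as given only asks for the bound on $\partial_t H_t$. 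No blender or hyperbolicity input is needed here — this proposition is purely a statement about interpolating circle maps of a fixed degree, and it is the technical device that later lets one choose the homotopy $f_y$ in the skew product with $|\partial_y f_y|$ arbitrarily small.
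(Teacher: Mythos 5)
Your proof is correct and follows essentially the same route as the paper's: lift $f,g$ to $\tilde f,\tilde g$ on $\mathbb{R}$, take the straight-line isotopy $\tilde H_t=(1-t)\tilde f+t\tilde g$, observe it satisfies $\tilde H_t(x+1)=\tilde H_t(x)+\mu$ and hence descends, and bound $|\partial_t H_t|$ by $\|\tilde f-\tilde g\|_{C^0}<2\varepsilon$. You spell out the one point the paper glosses over (that matching degrees make $\tilde f-\tilde g$ periodic, so lift closeness can be arranged uniformly), which is a welcome bit of care; otherwise the two arguments coincide.
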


\begin{proof}
	Let $\pi:\mathbb{R} \to \mathbb{S}^1$ be the universal covering map.
	Let $\tilde{f}$ and $\tilde{g}$ be the lift of $f$ and $g$, respectively, such that $\tilde{f}$ and $\tilde{g}$ are $\varepsilon$ $C^0$-close each other.
	Then, we may define the isotopy from $\tilde{f}$ to $\tilde{g}$ by $\tilde{H}_t(\tilde{x})=(1-t)\tilde{f}(\tilde{x})+t \tilde{g}(\tilde{x})$,
	for every $\tilde{x} \in \mathbb{R}, t \in [0,1]$.
	Since $\tilde{H}_t(\tilde{x}+m)
	=	\tilde{H}_t(\tilde{x})+\mu m$,	for every $ \tilde{x} \in \mathbb{R},\,m \in \mathbb{Z},$
	we have that the homotopy $H_t(x)=\pi \circ \tilde{H}_t(\tilde{x})$ between $f$ and $g$ is well defined.
	Furthermore,
	$$|\partial_t H_t|\leq (\max_{\tilde{x}\in \mathbb{R}}{|D\pi(\tilde{x})|}) |\partial_t \tilde{H}_t|.$$ 
	Since 
	$|\partial_t \tilde{H}_t|=|\tilde{f}(\tilde{x})-\tilde{g}(\tilde{x})|<2\varepsilon$ and $\max_{\tilde{x}\in \mathbb{R}}{|D\pi(\tilde{x})|}$
	is bounded, we may choose $\varepsilon > 0$ small enough such that $|\partial_t H_t|<\delta$.
\end{proof}

We now assume that $f_y$ is a homotopy that $|\partial_y f_y|<\delta$. Next, it will be specified some restriction over $\delta>0$ to prove the hyperbolic structure for $F$.

\begin{lemma}[Unstable cone fields for $F$] \label{open-conefields}
Given $\delta>0$ and $\alpha >0$, there exist $0<\theta:=\theta(\delta,\alpha)<1$ and $1<\lambda' <\lambda$ such that the following properties hold:
\begin{enumerate}[label=$\mathrm{(\alph*)}$]
\item$\overline{DF_p(\mathcal{C}_{\alpha}^u(p))}\backslash\{(0,0)\} \subset \mathcal{C}_{\theta \alpha}^u(F(p))$,
for every $p \in \T^{2}$;
\item if $w\in \mathcal{C}_{\alpha}^u(p),$ then $\|DF_p(w)\|\geq \lambda'\|w\|$.
\end{enumerate}
\end{lemma}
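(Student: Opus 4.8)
The plan is to work in the coordinates given by the eigendirections $E^c$ (horizontal) and $E^u$ (vertical) and to compute $DF_p$ explicitly from the skew-product formula $F(x,y)=(f_y(x),\lambda y)$. In these coordinates
\[
DF_p=\begin{pmatrix} \partial_x f_y(x) & \partial_y f_y(x)\\[2pt] 0 & \lambda\end{pmatrix},
\]
so a vector $(u,v)$ is sent to $\bigl(\partial_x f_y\cdot u+\partial_y f_y\cdot v,\ \lambda v\bigr)$. The first step is to record the uniform bounds we have on the entries: $|\partial_x f_y|\le\mu'$ with $\mu'$ close to $\mu$ (from item (4) and the remark in Section~\ref{1dd}, together with the fact that outside $\mathcal{R}_J$ the map is exactly $f_0$), and $|\partial_y f_y|<\delta$ by Proposition~\ref{homotopy}. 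The strong domination hypothesis $1<|\mu|\ll|\lambda|$ enters precisely here: it guarantees $\mu'<\lambda$ with a definite gap, which is what makes the cone field strictly invariant.

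For part (a), take $(u,v)\in\mathcal{C}^u_\alpha(p)$, so $\|u\|\le\alpha\|v\|$, and estimate the slope of the image vector. Its horizontal component has norm at most $\mu'\|u\|+\delta\|v\|\le(\mu'\alpha+\delta)\|v\|$, while its vertical component has norm exactly $\lambda\|v\|$. Hence the image lies in $\mathcal{C}^u_{\theta\alpha}$ with $\theta\alpha=(\mu'\alpha+\delta)/\lambda$, i.e. $\theta=(\mu'\alpha+\delta)/(\lambda\alpha)=\mu'/\lambda+\delta/(\lambda\alpha)$. Since $\mu'/\lambda<1$ strictly (by the domination assumption), one first fixes this gap and then chooses $\delta$ small enough — via Proposition~\ref{homotopy}, shrinking $\varepsilon$ — so that $\theta<1$; note $v\ne 0$ whenever $(u,v)\ne(0,0)$ and $(u,v)\in\mathcal C^u_\alpha$, which is what lets us divide and also gives the strict inclusion $\overline{DF_p(\mathcal C^u_\alpha(p))}\setminus\{(0,0)\}\subset\mathcal C^u_{\theta\alpha}(F(p))$. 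The closure is harmless because the estimate is a non-strict inequality on a ratio that is then strictly less than $1$.

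For part (b), with $w=(u,v)\in\mathcal{C}^u_\alpha(p)$ one bounds $\|DF_p(w)\|$ from below by its vertical component: $\|DF_p(w)\|\ge\lambda\|v\|$. On the other hand $\|w\|\le\|u\|+\|v\|\le(1+\alpha)\|v\|$ (or $\sqrt{1+\alpha^2}\,\|v\|$ with the Euclidean norm), so $\|DF_p(w)\|\ge\frac{\lambda}{1+\alpha}\|w\|$. Choosing $\alpha$ small — which one is free to do, and which also helps in part (a) — makes $\lambda':=\lambda/(1+\alpha)$ satisfy $1<\lambda'<\lambda$, as required. I would present (a) and (b) together since both reduce to the same two elementary estimates on the coordinates of $DF_p(w)$.

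The only genuine subtlety, rather than an obstacle, is the order of quantifiers: $\delta$ is given first in the statement, so one must check that the argument only needs $\delta$ to be \emph{below} a threshold determined by the fixed domination gap $\mu'/\lambda$ and by $\alpha$; in fact the statement already grants us freedom to pick $\theta$ and $\lambda'$ after seeing $\delta$ and $\alpha$, so no re-choice of $\delta$ is needed — we simply set $\theta=\mu'/\lambda+\delta/(\lambda\alpha)$ and shrink nothing. If one wanted $\theta$ independent of $\delta$ one would instead invoke Proposition~\ref{homotopy} to make $\delta$ as small as desired relative to $\alpha$; either reading closes the proof. A final remark is that on the region outside $\mathcal R_J$ the map is linear ($F=L$ there), where the conclusion is immediate from $L(\mathcal C^u_\alpha)\subset\mathcal C^u_{\mu\alpha/\lambda}$ noted after~\eqref{coneL}, so the content is entirely in the strip $\mathcal R_J$ where the $\partial_y f_y$ term is active.
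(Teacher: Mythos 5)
Your proof is correct and takes essentially the same approach as the paper's: the paper states the same upper-triangular form of $DF$ in the $(E^c,E^u)$ splitting together with the bounds $|\partial_y f_y|<\delta$ and $|\partial_x f_y|<\mu'$ (so that the off-diagonal term is dominated), and declares the cone estimates to follow, while you carry out exactly those elementary slope and norm computations. Your closing remark about the quantifier order is a fair observation; the precise inequality your computation needs is $\mu'\alpha+\delta<\lambda\alpha$ (slightly sharper than the $\mu'+\delta<\lambda$ the paper records), which is what the "restriction over $\delta$" the paper flags just before the lemma is meant to provide.
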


\begin{proof}
The proof follows easily from the facts $|\partial_y f_y|<\delta$ and the derivative of $F$ is
$$DF=\left(\begin{array}{cc}
\partial_x f_y & \partial_y f_y \\
0 & \lambda
\end{array}\right)$$
with $|\partial_x f_y| + \delta < \lambda$.
\end{proof}

It is well known that an unstable cone field is shared by every endomorphism $C^1$-close enough to $F$. That is, there exists a neighborhood $\mathcal{U}_F$ of $F$ in $\mathrm{End}^1(\mathbb{T}^2)$ so that the unstable cone field $\mathcal{C}^u_{\alpha}$ so is an unstable cone field for any $G$ in $\mathcal{U}_F$. Thus, it follows direct from Lemma \ref{open-conefields} the following statement.

\begin{cor}
There exists $1<\lambda'<\lambda$ such that for every $G \in \mathcal{U}_F$ and every u-arc $\gamma$ (that is, $\gamma' \subset \mathcal{C}^u_{\alpha}$), holds that $\ell(G^n(\gamma))\geq (\lambda')^n \ell(\gamma)$ for each $n\geq 1$. 
\end{cor}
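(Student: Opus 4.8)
The plan is to deduce the corollary directly from part (b) of Lemma~\ref{open-conefields}, together with the robustness of the unstable cone field. First I would fix the neighborhood $\mathcal{U}_F$ of $F$ in $\mathrm{End}^1(\mathbb{T}^2)$ provided by the discussion preceding the statement, so that $\mathcal{C}^u_{\alpha}$ is an unstable cone field for every $G \in \mathcal{U}_F$; shrinking $\mathcal{U}_F$ if necessary, I would use the fact that the constant $\lambda'$ in Lemma~\ref{open-conefields}(b) can be chosen with $1<\lambda'<\lambda$ and that the estimate $\|DG_p(w)\| \ge \lambda'\|w\|$ for all $w \in \mathcal{C}^u_{\alpha}(p)$ persists under a small $C^1$-perturbation of $F$ (this is just an open condition on the derivative, exactly as in the proof of the lemma). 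This gives a single uniform $\lambda'$ working for all $G \in \mathcal{U}_F$ simultaneously.

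Next I would set up the length computation. Let $\gamma : [0,1] \to \mathbb{T}^2$ be a $u$-arc, i.e.\ $\gamma'(t) \in \mathcal{C}^u_{\alpha}(\gamma(t))$ for all $t$. Since $\mathcal{C}^u_{\alpha}$ is a $G$-unstable cone field, invariance $\overline{DG_p(\mathcal{C}^u_{\alpha}(p))} \subset \mathcal{C}^u_{\alpha}(G(p))$ shows inductively that $G^n \circ \gamma$ is again a $u$-arc for every $n \ge 1$: its velocity is $(G^n\circ\gamma)'(t) = DG^n_{\gamma(t)}(\gamma'(t)) \in \mathcal{C}^u_{\alpha}(G^n(\gamma(t)))$. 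Then I would write
\begin{align}
\ell(G^n(\gamma)) = \int_0^1 \|(G^n\circ\gamma)'(t)\|\, dt = \int_0^1 \|DG^n_{\gamma(t)}(\gamma'(t))\|\, dt,
\end{align}
and estimate the integrand. Writing $DG^n_{\gamma(t)} = DG_{G^{n-1}(\gamma(t))} \circ \cdots \circ DG_{\gamma(t)}$ and applying the expansion estimate $\|DG_p(w)\| \ge \lambda'\|w\|$ at each step — which is legitimate because at each stage the vector being differentiated lies in the cone $\mathcal{C}^u_{\alpha}$ at the appropriate point, by the cone invariance just noted — yields $\|DG^n_{\gamma(t)}(\gamma'(t))\| \ge (\lambda')^n\|\gamma'(t)\|$. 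Substituting back gives
\begin{align}
\ell(G^n(\gamma)) \ge (\lambda')^n \int_0^1 \|\gamma'(t)\|\, dt = (\lambda')^n\, \ell(\gamma),
\end{align}
which is the claim.

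There is no real obstacle here; the only points requiring care are bookkeeping ones. The first is making sure the single constant $\lambda'$ is uniform over the whole neighborhood $\mathcal{U}_F$ rather than depending on $G$ — handled by taking $\lambda'$ slightly below the value in Lemma~\ref{open-conefields}(b) and shrinking $\mathcal{U}_F$. The second is the inductive verification that iterates of a $u$-arc remain $u$-arcs so that the pointwise expansion estimate may be chained along the orbit; this is immediate from cone invariance but should be stated explicitly. If one wants to allow merely $C^1$ $u$-arcs (or even rectifiable ones), the same argument goes through by approximating the length integral by Riemann sums over a partition and applying the cocycle expansion on each subinterval, but for $C^1$ arcs the integral form above is cleanest.
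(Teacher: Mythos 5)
Your argument is correct and is exactly what the paper intends: the corollary is stated there as an immediate consequence of Lemma~\ref{open-conefields} together with the robustness of the cone field and the expansion estimate under $C^1$-perturbation, with the chaining-along-the-orbit and length-integral details left implicit. Your write-up simply fills in those routine details (uniformity of $\lambda'$ over $\mathcal{U}_F$ and the inductive cone invariance), so there is nothing to correct.
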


Next statement guarantees that every arc tangent to the unstable cone field has some iterate ``almost" like the leaves of $\mathcal{F}^u_i$ which is the connected component of the unstable foliation $\mathcal{F}^u$ in $\mathcal{R}_i$.

\begin{prop}\label{lambda-lemma}
For every $\tau >0$ small enough, we can take $\mathcal{U}_F$ small enough such that for every $G$ in $\mathcal{U}_F$ and every $u$-arc $\gamma$ (that is, $\gamma' \subset \mathcal{C}^u_{\alpha}$), there exist a sequence of points $(q_n)_n$ and a sequence of $u$-arc $(\gamma_n)_n$ in $\mathcal{R}_i$ satisfying:
\begin{enumerate}[label=$(\roman*)$]
\item $q_n \in \gamma_n, \, \gamma_0\subset \gamma,$ and $\gamma_n\subset G(\gamma_{n-1})$;
\item $(\gamma_n)_n$ is $\tau \, \, C^1$-close to $\mathcal{F}^u_i(q_n)$ for large $n$.
\end{enumerate}
\end{prop}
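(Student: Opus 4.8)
The statement is a non-invertible, $C^1$-robust version of the Inclination ($\lambda$-)Lemma adapted to the skew-product setting. The plan is to follow the orbit of a $u$-arc through the hyperbolic stripes $\mathcal{R}_i$ and show that, each time the arc crosses $\mathcal{R}_i$, a sub-arc contained in $\mathcal{R}_i$ is stretched by a definite factor $\lambda'>1$ in the unstable direction while its tangent cone is contracted toward $E^u$ by the factor $\theta<1$ of Lemma~\ref{open-conefields}, so that after finitely many returns it is $C^1$-close to a strong unstable leaf. First I would fix $\tau>0$ and, using Lemma~\ref{open-conefields} and its Corollary together with the continuous dependence of the unstable cone field and of the crossing geometry on $G\in\mathcal{U}_F$, choose $\mathcal{U}_F$ small enough that all the estimates below hold uniformly for every $G\in\mathcal{U}_F$ (with constants $\theta=\theta(\delta,\alpha)<1$ and $\lambda'>1$ only slightly worse than for $F$ itself). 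Then I would set $\gamma_0\subset\gamma$ and build the sequences $(\gamma_n)_n$, $(q_n)_n$ inductively.

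The inductive step has two parts. \emph{Stretching across a stripe.} Given $\gamma_{n-1}$ a $u$-arc, the Corollary gives $\ell(G^k(\gamma_{n-1}))\geq(\lambda')^k\ell(\gamma_{n-1})$, and since the vertical (strong unstable) component of a $u$-arc is comparable to its length (the arc is almost vertical), after finitely many iterates the image $u$-arc wraps around $\mathbb{S}^1$ in the $y$-direction, hence necessarily crosses one of the horizontal stripes $\mathcal{R}_i$ completely; pick such a first crossing, let $\gamma_n$ be a maximal sub-arc of $G(\gamma_{n-1})$ (or of a suitable iterate, renaming) contained in $\mathcal{R}_i$ and spanning it in the $y$-direction, and let $q_n\in\gamma_n$ be any point. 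This gives (i). \emph{Flattening toward $\mathcal{F}^u_i$.} On $\mathcal{R}_i$ the map $G$ is $C^1$-close to $F(x,y)=(f_i(x),\lambda y)$, a skew product, so by the cone-contraction (a) of Lemma~\ref{open-conefields} the slope of $\gamma_n'$ relative to $E^u$ is bounded by $\theta\alpha$, then $\theta^2\alpha$, and so on at successive crossings; since $\mathcal{F}^u_i(q_n)$ is exactly the vertical leaf through $q_n$ and $\theta^m\alpha\to0$, the arcs $\gamma_n$ become $\tau$-$C^1$-close to $\mathcal{F}^u_i(q_n)$ once the accumulated number of crossings exceeds some $N=N(\tau,\alpha,\theta)$. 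I would phrase the $C^1$-closeness quantitatively: writing $\gamma_n$ as a graph $x=\phi_n(y)$ over (a full turn of) the $y$-circle inside $\mathcal{R}_i$, the cone condition says $|\phi_n'|\leq\theta^{m(n)}\alpha$ with $m(n)\to\infty$, which is precisely $C^1$-proximity to a constant function, i.e.\ to a vertical leaf.

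The main obstacle I expect is the \emph{non-invertibility} together with the passage through the deformed region: unlike the classical $\lambda$-Lemma for a diffeomorphism near a hyperbolic fixed point, here a $u$-arc need not return to a single small neighborhood of $p_G$, it travels globally around the torus, and the maps $f_y$ on $\mathbb{S}^1\setminus(-2\varepsilon,2\varepsilon)$ are only $C^0$-close to the linear $f_0$ and $C^1$-far from it (expansion up to $\mu'>\mu$). So I must check (1) that the cone field $\mathcal{C}^u_\alpha$ and the expansion estimate $\|DG(w)\|\geq\lambda'\|w\|$ genuinely hold \emph{everywhere}, not just on $\mathcal{Q}$ — this is exactly the content of Lemma~\ref{open-conefields}, which requires $|\partial_x f_y|+\delta<\lambda$, guaranteed by $1<|\mu|\ll|\lambda|$ via Proposition~\ref{homotopy}; and (2) that the arc is forced into some $\mathcal{R}_i$ infinitely often. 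Point (2) follows because once a $u$-arc has $y$-length $\geq1$ it covers the whole $y$-circle and therefore meets every horizontal stripe, in particular $\mathcal{R}_i$; and the uniform expansion $\lambda'>1$ of the unstable length, combined with the fact that $u$-arcs are uniformly transverse to the horizontals (slope $\leq\alpha$), guarantees the $y$-length reaches $1$ after a bounded number of steps regardless of where the arc sits. A secondary technical point is to ensure the chosen sub-arcs $\gamma_n$ do not shrink below a usable size under the restriction to $\mathcal{R}_i$; this is handled by taking $\gamma_n$ to be a full $y$-crossing of $\mathcal{R}_i$, whose unstable length is bounded below by the (fixed) height of $\mathcal{R}_i$ divided by $\sup|\partial_y\text{-slope}|\leq$ const, hence uniformly bounded below, so the stretching-and-flattening can be iterated indefinitely and (ii) is achieved for all large $n$.
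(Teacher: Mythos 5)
Your overall strategy matches the paper's: use the uniform cone field and unstable expansion to force a $u$-arc to cross $\mathcal{R}_i$, take $\gamma_n$ to be the component of $G(\gamma_{n-1})$ inside $\mathcal{R}_i$, and then show the slope of $\gamma_n'$ relative to $E^u$ eventually drops below $\tau$. The part about forcing the arc into $\mathcal{R}_i$, and the observation that a full $y$-crossing of $\mathcal{R}_i$ reproduces itself under $G$ (because $\lambda J_i\supseteq J\supseteq J_i$), is essentially what the paper does, even if you phrase it a bit differently through ``wrapping around~$\mathbb{S}^1$''.

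There is however a genuine gap in the flattening step. You claim that Lemma~\ref{open-conefields}(a) lets you iterate the cone-contraction geometrically, bounding the slope of $\gamma_n'$ by $\theta\alpha$, then $\theta^2\alpha$, \dots, with $\theta^m\alpha\to 0$. This is false for a perturbation $G$ of $F$. Lemma~\ref{open-conefields}(a) is a statement about the fixed cone of aperture $\alpha$: it gives $\theta=\theta(\delta,\alpha)$ with $DF(\mathcal{C}^u_\alpha)\subset\mathcal{C}^u_{\theta\alpha}$, but it does \emph{not} say $DF(\mathcal{C}^u_{\theta\alpha})\subset\mathcal{C}^u_{\theta^2\alpha}$. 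The reason is that the slope map is affine, not linear: if $G=(g^1,g^2)$ with $|\partial_x g^1|<\mu'$, $|\partial_y g^1|,|\partial_x g^2|<\tau$, $|\partial_y g^2-\lambda|<\tau$, then a vector of slope $\rho$ goes to a vector of slope at most $(\mu'\rho+\tau)/(|\lambda-\tau|-\tau\rho)$, and the additive term $\tau$ in the numerator (coming from the off-diagonal $\partial_y g^1$, which is exactly zero for $F$ restricted to $\mathcal{R}_i$ but only $<\tau$ for $G$) puts an absolute floor on the output slope. As the cone gets thinner the ``contraction ratio'' degrades, so there is no uniform $\theta<1$ that compounds. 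Consequently the slope does not go to zero; it converges to a nonzero limit of order $\tau/(\lambda-\mu')$. The paper sidesteps this by writing out the affine recursion
\[
\rho_n\leq\frac{\mu'\rho_{n-1}+\tau}{|\lambda-\tau|-\tau\rho_{n-1}}\leq\frac{\rho_{n-1}}{b}+\frac{\tau}{\mu'b},\qquad b=\frac{|\lambda-\tau|-\tau\alpha}{\mu'}>1,
\]
and summing the resulting geometric series, getting $\rho_n\leq\rho_0 b^{-n}+\tau(1-b^{-n})/(\mu'(b-1))$; one then checks that for $|\mu|\ll|\lambda|$ the limit $\tau/(\mu'(b-1))$ is itself $<\tau$. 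Your conclusion survives, but you need to replace the geometric-contraction claim by this affine recursion (or, equivalently, shrink $\mathcal{U}_F$ to a $C^1$-closeness parameter strictly smaller than the target $\tau$ and track the fixed point of the slope map explicitly).
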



\begin{proof}
First, note that the cone $\mathcal{C}^u_{\alpha}$ is an unstable cone field for $G$ and the curve $G^n(\gamma)$ grows up transversely to the weak unstable foliation $\mathcal{F}^c$. Then, up to get an iterate of $\gamma$, we can assume $\gamma$ cross the stripe $\mathcal{R}_i$. Furthermore, we get $\gamma_n$ as the component of $G(\gamma_n)$ in $\mathcal{R}_i$, where $\gamma_0$ is the $u$-arc $\gamma$. Each curve $\gamma_n$ is an $u$-arc and intersects each leaf of $\mathcal{F}^c$ in the stripe $\mathcal{R}_i$ exactly once.

Since $F(x,y)=(f_i(x),\lambda y  \,\, (\mathrm{mod}\,\, 1))$ in $\mathcal{R}_i$, we can shrink $\mathcal{U}_F$, if necessary, to get that for every $G=(g^1,g^2) \in \mathcal{U}_F$ holds that
$|\partial_x g^1|< \mu', |\partial_y g^1|, |\partial_x g^2|<\tau $, and $|\partial_y g^2-\lambda|< \tau$.   

Let $v_0=(v_0^c,v_0^u)$ be an unit vector in $\mathcal{C}_{\alpha}^u(q_0)$ with slope $\rho_0=|v_0^c| / |v_0^u|$. We take $q_n=G^n(q_0) \in \mathcal{R}_i$, for each $n \geq 0$, and $v_n=DG(v_{n-1}) \in \mathcal{C}_{\alpha}^u(q_n)$ with slope $\rho_n$. Then, we can see inductively that	
\begin{align}\label{eq:key}
\rho_{n}=\dfrac{|v_n^c|}{|v_n^u|}=\dfrac{|\partial_x g^1(q_0)v^c_{n-1}+\partial_y g^1(q_0)v^u_{n-1}|}{|\partial_x g^2(q_0)v^c_{n-1}+\partial_y g^2(q_0)v^u_{n-1}|}
\leq \dfrac{\mu'\rho_{n-1}+\tau}{|\lambda-\tau|-\tau\rho_{n-1}}.
\end{align}
Since $\overline{DG(\mathcal{C}_{\alpha}^u(q))}\backslash \{(0,0)\}\subset \mathcal{C}_{\alpha}^u(G(q))$, we have $\rho_n< \alpha$ and
\begin{align*}
\rho_{n} &\leq \dfrac{\mu'\rho_{n-1}+\tau}{|\lambda-\tau|-\tau\rho_{n-1}}\leq \dfrac{\rho_{n-1}}{b}+\dfrac{\tau}{\mu'b}
\end{align*}
where $b=(|\lambda-\eta|-\tau\alpha)/\mu'>1$. Then,
\begin{align*}
\rho_{n} \leq \dfrac{\rho_{0}}{b^n}+\dfrac{\tau}{\mu'}\sum_{i=1}^{n}\dfrac{1}{b^i} = \dfrac{\rho_{0}}{b^n}+\dfrac{\tau(1-b^{-n})}{\mu'(b-1)}.
\end{align*}
By compactness, the vector $v_0$ can be chosen so that $\rho_0$ is the maximum possible slope of unit vectors in $\mathcal{C}^u_{\alpha}$. This together with the fact that $G(\|\gamma_n'(q)\|)\geq \lambda'\|\gamma_n'(q)\|$ guarantee that $\gamma_n$ is $\tau$ $C^1$-close to $\mathcal{F}_i$ for large $n$.
\end{proof}

\subsubsection{Dynamics properties for IFS}
Since $F(x,y)=(f_i(x), \lambda y \, (\mathrm{mod}\,\, 1))$ for all $(x,y) \in \mathcal{R}_i$, we can see that each strong unstable leaf $\mathcal{F}^u(x,y)$ in $\mathcal{R}_i$ is the vertical interval $\{x\}\times J_i$, and so, $F(\{x\}\times J_i)$ contains the vertical interval $\{f_i(x)\}\times J$ which contains $\{f_i(x)\}\times J_i$ for each $i=0,1,2$. Then, the orbit of the vertical interval $\{x\}\times J_i$ by $F$ contains $\{h(x)\}\times J$ with $h$ in IFS $<f_0, f_1, f_2>$.

Thus, in order to understand the orbit of $F$ we show here some dynamics properties for IFS $<f_0, f_1, f_2>$.

\begin{prop}\label{minimal-ISF}
For every $x \in \mathbb{S}^1$, the orbit $\mathcal{O}(x)$ 
by the IFS $<f_0,f_1,f_2>$ intersects the interval $I$.
\end{prop}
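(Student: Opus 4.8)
The plan is to exploit the combinatorics of the one-dimensional maps $f_0, f_1, f_2$ restricted near the origin, using property (4) of Section~\ref{1dd}: the affine map $f_0$ on $\mathbb{S}^1 \setminus (-2\varepsilon, 2\varepsilon)$ has some point $\beta_i$ whose $f_0$-orbit is dense, but more importantly we have $f_1, f_2$ with attracting fixed points $p_0^1 < 0 < p_0^2$ inside $I = [p_0^1, p_0^2]$ and repelling fixed points $p^i_\pm$ with $p^i_- < -2\varepsilon < p^i_0 < 2\varepsilon < p^i_+$. First I would observe that $f_0(x) = \mu x \pmod 1$ is an expanding map of degree $|\mu| > 1$, so the ``pushing out'' dynamics of $f_0$ alone will move most points away from the origin; the role of $f_1$ and $f_2$ is to ``capture'' points and drag them into $I$ once they are close enough.

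The key steps, in order, would be: (1) Reduce to showing that $\mathcal{O}(x)$ meets the small neighborhood $(-\varepsilon, \varepsilon)$ (or even just $(-2\varepsilon, 2\varepsilon)$), since once some $h(x)$ lands in the basin of the attracting fixed point $p_0^i$ of $f_i$, further iteration of $f_i$ drives it monotonically into $I = [p_0^1, p_0^2]$; one must check the basins of $p_0^1$ (for $f_1$) and $p_0^2$ (for $f_2$) between the repellers $p^i_-$ and $p^i_+$ together cover $(-2\varepsilon, 2\varepsilon)$ — indeed each $f_i$ restricted to $(p^i_-, p^i_+)$ has $p^i_0$ as a global attractor on that interval, and $I \subset (p^i_-, p^i_+)$. (2) Show that iterating $f_0$ from an arbitrary $x$ eventually lands in $(-2\varepsilon, 2\varepsilon)$: because $f_0$ is linearly expanding with factor $|\mu|$, and because the complement $\mathbb{S}^1 \setminus (-2\varepsilon, 2\varepsilon)$ cannot be forward-invariant under a degree-$\mu$ expanding map (a standard covering/measure argument: an expanding map of the circle has no proper closed forward-invariant set with nonempty interior other than those forced by finite orbits, and for $\mu x \bmod 1$ every interval eventually covers $\mathbb{S}^1$), some forward $f_0$-iterate of $x$ enters $(-2\varepsilon, 2\varepsilon)$. (3) Combine: apply $f_0$ repeatedly until the orbit point lies in $(-2\varepsilon, 2\varepsilon)$, then switch to the appropriate $f_i$ and use monotone convergence to the attractor to reach $I$.

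For step (2) I would make the covering argument explicit: since $f_0$ is the linear map $x \mapsto \mu x \pmod 1$ with $|\mu| \geq 2$, any interval of length $\ell$ maps to an interval of length $|\mu|\ell$ (until it wraps), so after finitely many iterates the image of any nondegenerate interval is all of $\mathbb{S}^1$; in particular the forward $f_0$-orbit of any point $x$ is not confined to the closed set $\mathbb{S}^1 \setminus (-2\varepsilon, 2\varepsilon)$, hence enters $(-2\varepsilon, 2\varepsilon)$. (One can also phrase this via the dense orbit of $\beta_i$ from property (4), though that gives density of a specific orbit rather than the needed statement for every $x$; the cleaner route is the direct expansion estimate.)

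The main obstacle I anticipate is step (2) together with the precise interplay of scales in step (1): one needs the repelling fixed points $p^i_\pm$ to lie outside $(-2\varepsilon, 2\varepsilon)$ (guaranteed by property (3)) so that the \emph{entire} interval $(-2\varepsilon,2\varepsilon)$ lies in the immediate basin of attraction of $p_0^i$ under $f_i$, ensuring that no matter where in $(-2\varepsilon, 2\varepsilon)$ the $f_0$-orbit first lands, a suitable choice of $i \in \{1,2\}$ lets $f_i$-iteration carry it monotonically into $I$. Verifying that $f_i|_{(p^i_-,p^i_+)}$ indeed has $p^i_0$ as a global attractor (no other fixed points in between, correct orientation of the graph relative to the diagonal) is where one must read off the shape of the graphs in Figure~\ref{ISF-degree>1}; this is elementary but is the crux of the argument.
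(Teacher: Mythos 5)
There is a genuine gap in your step (2), and it is fatal to the argument as written. You claim that for every $x$, some forward $f_0$-iterate of $x$ enters $(-2\varepsilon, 2\varepsilon)$, and you justify this by a covering argument ("any nondegenerate interval eventually covers $\mathbb{S}^1$"). That argument establishes density of the forward orbit of an \emph{interval}, not of a \emph{point}; for an expanding circle map these are very different statements. Indeed $f_0(x)=\mu x\ (\mathrm{mod}\ 1)$ has periodic orbits that avoid the fixed interval $(-2\varepsilon,2\varepsilon)$ forever: if $\mu=5$ then $x=1/2$ is fixed, and if $\mu=2$ the $2$-cycle $\{1/3,2/3\}$ stays a distance $1/6$ from the origin. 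Your appeal to "no proper closed forward-invariant set with nonempty interior" is a red herring: the closure of such a periodic orbit has empty interior, and $\mathbb{S}^1\setminus(-2\varepsilon,2\varepsilon)$ is not forward-invariant (its $f_0$-image is all of $\mathbb{S}^1$), so no contradiction arises. The statement you actually need — that \emph{every} $f_0$-orbit meets the interval — is simply false.

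You half-noticed the issue when you observed that property~(4) of Section~\ref{1dd} "gives density of a specific orbit rather than the needed statement for every $x$," but you then discarded it in favor of the flawed expansion estimate. In fact property~(4) is the essential ingredient, and the paper uses it as follows. Suppose the pure $f_0$-orbit of $x$ never meets $(p_-^i,p_+^i)$ (otherwise your step (1), which matches the paper's first paragraph and is fine, finishes). Then every $f_0^{k}(x)$ lies in $\mathbb{S}^1\setminus(p_-^i,p_+^i)\subset\mathbb{S}^1\setminus(-2\varepsilon,2\varepsilon)$, where $f_i$ is the affine map $\alpha_i x+\beta_i$. Inserting a single $f_i$ into a long $f_0$-block gives
$$
f_0^{\,n-k}\circ f_i\circ f_0^{\,k-1}(x)=\mu^{\,n-1}\alpha_i x+f_0^{\,n-k}(\beta_i)\quad(\mathrm{mod}\ 1).
$$
For fixed large $n$, as $k$ runs over $1,\dots,n-1$ the second summand runs through $\{f_0^{\,j}(\beta_i):1\le j\le n-1\}$, which is $\eta$-dense by the choice of $\beta_i$, while the first summand is a fixed translation. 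A translate of an $\eta$-dense set is $\eta$-dense, so these IFS-orbit points enter $(p_-^i,p_+^i)$. This is exactly where the hypothesis that $\mathcal{O}^+(\beta_i,f_0)$ is dense is consumed; without it the proposition fails, for the periodic-orbit reason above.
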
 

\begin{proof}
Since $p_0^i \in (p_{-}^i,p_{+}^i)$ is the unique attractor for $f_i$, for $i=1,2$, it is enough to show that $\mathcal{O}(x)$ intersects $(p_{-}^1,p_{+}^2)$. Indeed, for each $h(x) \in (p_{-}^1,p_{+}^2)$, we can iterate it either by $f_1$ or $f_2$, getting either $f^k_1(h(x))$  close to the attractor $p_0^1$ or $f^k_2(h(x))$ close to $p_0^2$. 
Then, we iterate by either  $f_1$ or $f_2$,  so that the orbit enters in $I$. 
	
We prove now that for each $x$ on the circle, there is $h$ in $<f_0,f_i>$ such that $h(x)$ belongs to $(p_{-}^i,p_{+}^i)$. In order to prove that, we suppose without loss of generality that $f_0^k(x)\notin (p_{-}^i,p_{+}^i)$ for every $k\geq 0$. Then, we observe that
$$\underbrace{f_0 \circ \cdots \circ f_0}_{n-k} \circ f_i \circ \underbrace{f_0 \circ \cdots \circ f_0}_{k-1}(x)
=f_0^{n-k} (f_i (f_0^{k-1}(x)) = \mu^{n-1}\alpha_i x+f_0^k(\beta_i),$$
for $1\leq k \leq n-1$, belong to the orbit $\mathcal{O}(x)=\{h(x): h \in <f_0,f_1, f_2>\}$. Recall that $f_i(x)=\alpha_ix+\beta_i$ for $x \in \mathbb{S}^1\backslash (p_{-}^i,p_{+}^i)$ and $\{f_0^k(\beta_i):k\geq 0 \}$ is dense in $\mathbb{S}^1$. Thus, we can conclude that  $\{h(x): h \in <f_0,\, f_i>\}$ is dense in $\mathbb{S}^1$.
\end{proof}

The following holds as an immediate consequence of Proposition \ref{minimal-ISF} and  the construction of $F$.

\begin{lemma}\label{lemma:unstable}
For every $x \in \mathbb{S}^1$, the closure of $\{\{h(x)\}\times J: h \in <f_0,f_1,f_2>\}$ intersects the blender $(\mathcal{Q}, F)$. Consequently, for every $x \in \mathbb{S}^1$, it follows that 
\begin{align} 
\mathcal{Q} \cap \left( \cup_{k\geq 0} F^k(\{x\}\times J)\right) \neq \emptyset.
\end{align}
\end{lemma}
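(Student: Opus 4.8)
The plan is to deduce Lemma~\ref{lemma:unstable} directly from Proposition~\ref{minimal-ISF} together with the way $F$ acts on strong unstable leaves, which was recorded just above the statement. First I would recall the key observation: for $(x,y)\in\mathcal{R}_i$ the leaf $\mathcal{F}^u(x,y)$ is the vertical segment $\{x\}\times J_i$, and by condition (P.1) (equivalently the choice $\lambda J_i\supseteq J$) one has $F(\{x\}\times J_i)\supseteq\{f_i(x)\}\times J\supseteq\{f_i(x)\}\times J_i$. Iterating, an easy induction on the word length shows that for every $h=f_{i_m}\circ\cdots\circ f_{i_1}\in\,<f_0,f_1,f_2>$ the vertical segment $\{h(x)\}\times J$ is contained in $F^{m}(\{x\}\times J)$; indeed at each step one stays inside the horizontal stripe $\mathcal{R}_{i_k}$ (because $J_{i_k}\subseteq J$), so the previous inclusion applies repeatedly. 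Hence
\begin{align*}
\bigcup_{k\geq 0}F^k(\{x\}\times J)\ \supseteq\ \bigcup_{h\in<f_0,f_1,f_2>}\{h(x)\}\times J\ =\ \mathcal{O}(x)\times J.
\end{align*}

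Next I would invoke Proposition~\ref{minimal-ISF}: the orbit $\mathcal{O}(x)$ meets the interval $I=[p_0^1,p_0^2]$. Pick $h$ with $h(x)\in I$; then $\{h(x)\}\times J\subseteq \mathcal{O}(x)\times J$ is an entire vertical fibre of the box $\mathcal{Q}=I\times J$, so it is contained in $\mathcal{Q}$ itself. In particular $\mathcal{Q}\cap\bigl(\bigcup_{k\geq0}F^k(\{x\}\times J)\bigr)\supseteq\{h(x)\}\times J\neq\emptyset$, which is the displayed conclusion of the lemma. For the first sentence of the statement I would additionally note that the closure of $\{\{h(x)\}\times J:h\in<f_0,f_1,f_2>\}$ contains $\{h(x)\}\times J$ for the same $h$, a nonempty subset of $\mathcal{Q}$; since $(\mathcal{Q},F)$ denotes the blender supported on that very box, this set intersects the blender. (If one wants a point inside $R_1\cup R_2$ rather than just $\mathcal{Q}$, one iterates once more: by (P.1), $F(\{h(x)\}\times J_j)\supseteq\{f_j h(x)\}\times J$ for $j=1,2$, and these segments lie in $R_1,R_2$; but this refinement is not needed for the stated conclusion.)

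There is essentially no analytic obstacle here — the proof is a bookkeeping argument about which stripes the iterates land in. The one point that deserves care is the induction showing $\{h(x)\}\times J\subseteq F^{|h|}(\{x\}\times J)$: one must make sure that after applying $F$ and landing on the segment $\{f_{i_1}(x)\}\times J$, this new segment (which has full height $J$, not just $J_{i_1}$) still contains the sub-segment $\{f_{i_1}(x)\}\times J_{i_2}$ needed to continue — and it does precisely because $J_{i_2}\subseteq J$, so the next factor $f_{i_2}$ of the IFS can be realized by $F$ on the stripe $\mathcal{R}_{i_2}$. So the main (modest) obstacle is simply to phrase this nesting cleanly; everything else is immediate from Proposition~\ref{minimal-ISF} and the definition of $F$.
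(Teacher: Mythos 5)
Your proposal is correct and takes essentially the same route as the paper, which treats the lemma as an immediate consequence of Proposition~\ref{minimal-ISF} together with the observation made just before the statement that $F(\{x\}\times J_i)\supseteq \{f_i(x)\}\times J$, so that the $F$-orbit of $\{x\}\times J$ contains $\{h(x)\}\times J$ for every $h$ in the IFS. Your induction on word length simply makes that nesting explicit, and choosing $h$ with $h(x)\in I$ so that $\{h(x)\}\times J\subseteq\mathcal{Q}$ is exactly how the paper intends the conclusion to follow.
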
 

We know, by Remark \ref{s-mfld}, that the local stable manifold $\mathcal{W}^s_{loc}(p_F)$ contains the interval $I$. Then, the set $\cup_{k\geq 0}F^k(\{x\}\times J)$ intersects $\mathcal{W}^s_{loc}(p_F)$ for every $x \in \mathbb{S}^1$. By compactness, we can choose an iterate $\cup_{j=0}^nF^j(\{x\}\times J)$ which intersects the local stable manifold $\mathcal{W}^s_{loc}(p_F)$ for each $x \in \mathbb{S}^1$. Finally, shrinking, if necessary, the neighborhood $\mathcal{U}_F$, we can see that $\cup_{j=0}^nG^j(\{x\}\times J)$ is  close enough to $\cup_{j=0}^nF^j(\{x\}\times J)$ and, using that the local stable manifold $\mathcal{W}^s_{loc}(p_G)$ depends continuously of $G$, holds $\cup_{j\geq 0} G^j(\{x\}\times J)$ intersects $\mathcal{W}^s_{loc}(p_G)$ for each $G \in \mathcal{U}_F$. Therefore, we can conclude the following.

\begin{lemma}\label{stable intersects}
There exists a neighborhood $\mathcal{U}_F$ of $F$ such that for every $G \in \mathcal{U}_F$ and every $x \in \mathbb{S}^1$, we have that
\begin{align}\label{eq:blender}
\mathcal{W}^s_{loc}(p_G,G)\cap \left(\cup_{k\geq 0} G^k(\{x\}\times J)\right) \neq \emptyset.
\end{align}
More general, for each $u$-arc $\gamma$ and every $G \in \mathcal{U}_F$, we have that the orbit $\cup_{n\geq 0} G^n(\gamma)$ intersects $\mathcal{W}^s_{loc}(p_G)$
\end{lemma}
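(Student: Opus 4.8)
The plan is to deduce the more general statement (about arbitrary $u$-arcs) from the special case concerning vertical intervals $\{x\}\times J$, and to obtain the latter by the continuation argument already sketched in the paragraph preceding the lemma. Let me lay out the steps.

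\smallskip

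\textbf{Step 1: The $F$-statement.} First I would establish \eqref{eq:blender} for $G=F$. By Lemma~\ref{lemma:unstable}, for every $x\in\mathbb{S}^1$ the orbit $\bigcup_{k\ge 0}F^k(\{x\}\times J)$ meets $\mathcal{Q}=I\times J$. By Remark~\ref{s-mfld}, $I\subset\mathcal{W}^s_{loc}(p_F)$, and in fact the prototype construction shows $\mathcal{W}^s_{loc}(p_F)$ contains the whole horizontal segment through $p_F$ inside $\mathcal{Q}$; more importantly, the first-coordinate dynamics on $I$ contracts to the fixed point $p_F$, so after finitely many further horizontal iterates (applying $f_i$ on the relevant strip) the piece of orbit landing in $\mathcal{Q}$ is pushed into $\mathcal{W}^s_{loc}(p_F)$. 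Hence there is $n=n(x)$ with $\bigcup_{j=0}^{n}F^j(\{x\}\times J)\cap\mathcal{W}^s_{loc}(p_F)\ne\emptyset$. By compactness of $\mathbb{S}^1$ and continuity of $x\mapsto\{x\}\times J$ and of finite iterates of $F$, the integer $n$ can be taken uniform in $x$.

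\smallskip

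\textbf{Step 2: Pass to $G\in\mathcal{U}_F$ for vertical intervals.} Fix that uniform $n$. The map $G\mapsto\bigcup_{j=0}^{n}G^j(\{x\}\times J)$ is continuous in $G$ (uniformly in $x$, again by compactness), and $G\mapsto\mathcal{W}^s_{loc}(p_G)$ is continuous in the $C^1$ topology since $p_F$ is a hyperbolic saddle whose continuation $p_G$ and local stable manifold persist. The intersection in Step~1 is, for $F$, a transverse intersection between an expanding $u$-arc in $\bigcup_{j=0}^{n}F^j(\{x\}\times J)$ (whose tangent lies in $\mathcal{C}^u_\alpha$, which by Lemma~\ref{open-conefields} is $G$-invariant) and the nearly horizontal $\mathcal{W}^s_{loc}(p_F)$; transverse intersections are open, so shrinking $\mathcal{U}_F$ they persist for all $G\in\mathcal{U}_F$ and all $x$. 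This yields \eqref{eq:blender}.

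\smallskip

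\textbf{Step 3: General $u$-arcs.} Let $\gamma$ be any $u$-arc and $G\in\mathcal{U}_F$. By the Corollary following Lemma~\ref{open-conefields}, $\ell(G^n(\gamma))\ge(\lambda')^n\ell(\gamma)\to\infty$, and since every $G^n(\gamma)$ stays tangent to $\mathcal{C}^u_\alpha$ it grows transversally to the weak unstable foliation $\mathcal{F}^c$; in particular some iterate $G^{N}(\gamma)$ crosses the strip $\mathcal{R}_i$ completely, hence contains a vertical-like $u$-arc stretching across $J$. Using Proposition~\ref{lambda-lemma}, a further iterate of such a sub-arc is $\tau$-$C^1$-close to a strong unstable leaf $\{x\}\times J$ for some $x$; alternatively one observes directly that a $u$-arc crossing $\mathcal{R}_i$ contains, after applying $F|_{\mathcal{R}_i}$, a curve $C^0$-close to $\{f_i(x)\}\times J$. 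Either way, by Step~2 the forward $G$-orbit of this piece — hence of $\gamma$ — meets $\mathcal{W}^s_{loc}(p_G)$, possibly after first shrinking $\mathcal{U}_F$ once more so that the ``close to $\{x\}\times J$'' piece still enjoys the transverse-intersection conclusion.

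\smallskip

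\textbf{Main obstacle.} The delicate point is the \emph{uniformity} in $x$ (and then the openness in $G$) in Steps~1--2: one must be sure the number of iterates $n$ needed to drive the orbit into $\mathcal{W}^s_{loc}(p_F)$ does not blow up as $x$ varies, and that the intersection found is genuinely transverse so it survives $C^1$-perturbation. Both follow from compactness of $\mathbb{S}^1$ together with the cone-field invariance (Lemma~\ref{open-conefields}), which forces the relevant arcs to meet the near-horizontal $\mathcal{W}^s_{loc}(p_G)$ transversally; but making this precise — rather than the reduction to vertical intervals, which is essentially bookkeeping — is where the real work lies.
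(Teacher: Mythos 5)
Your proposal is correct and follows essentially the same route as the paper's proof: Step~1 (orbit of $\{x\}\times J$ meets $\mathcal{Q}$ via Lemma~\ref{lemma:unstable}, then use Remark~\ref{s-mfld} and compactness of $\mathbb{S}^1$ to get a uniform $n$), Step~2 (shrink $\mathcal{U}_F$ so that the finite union $\cup_{j=0}^n G^j(\{x\}\times J)$ stays close to the $F$-iterates and $\mathcal{W}^s_{loc}(p_G)$ varies continuously), and Step~3 (let $G^n(\gamma)$ grow and cross $\mathcal{R}_i$, apply Proposition~\ref{lambda-lemma} to get a component $C^1$-close to some $\{x\}\times J_i$, then shadow the vertical-interval case). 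The one point where you are slightly more careful than the paper is in naming transversality of the $u$-arc with the near-horizontal $\mathcal{W}^s_{loc}$ as the reason the intersection in Step~1 is open (hence the uniform $n$ exists, and the intersection persists for $G$ near $F$); the paper leaves this implicit, but it is the right justification and makes the compactness argument cleaner.
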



\begin{proof}
Recall that $\mathcal{W}^s_{loc}(p_G)$ depends continuously on $G$ 
 and that $\mathcal{W}^s_{loc}(P_F)$ contains the interval $I$ (which form the blender $\mathcal{Q}=I\times J$). Since $F^n(x,y)$ converges to $p_F$ for every $(x,y) \in \mathcal{W}^s_{loc}(p_F)$, we can assume that $\{x\}\times J$ hits $\mathcal{W}^s_{loc}(p_G)$. By compactness, we can take a large positive integer $n$ such that $\cup_{j=0}^n F^j(\{x\}\times J_i)$ intersects $\mathcal{W}^s_{loc}(p_F)$ for every $x \in \mathbb{S}^1$ and $i=0,1,2$. In particular, the set $\cup_{j=0}^n F^j(\{x\}\times J_i)$ intersects $\mathcal{W}^s_{loc}(p_G)$ for every $G \in \mathcal{U}_F$. Then, shrinking $\mathcal{U}_F$ if necessary, we can assume that $G^j$ is close to $F^j$ for each $j=1,\dots, n$ so that $\cup_{j=0}^n G^j(\{x\}\times J_i)$ also intersects $\mathcal{W}^s_{loc}(p_G)$ which proves the first part of the lemma.

We know that the length of $G^n(\gamma)$ grows exponentially and keep being tangent to the unstable cone $\mathcal{C}^u_{\alpha}$, for each $u$-arc $\gamma$. Then, we can assume that $G^n(\gamma)$ crosses the stripe $\mathcal{R}_i$ for each $i=0,1,2$. Say $\gamma_0$ a component of some $G^n(\gamma)$ in $\mathcal{R}_i$. Thus, up to shrink the neighborhood $\mathcal{U}_F$, we can take by Proposition\ref{lambda-lemma} a sequence $(\gamma_k)_k$ which $\gamma_k$ is the component of $G(\gamma_{k-1})$ in $\mathcal{R}_i$ and $\gamma_k$ is ``almost'' vertical as $\{x\}\times J_i$ for large $k$ and some $x \in \mathbb{S}^1$. Then, we can take $k$ large enough such that $G^j(\gamma_k)$
 stay close to $G^j(\{x\}\times J_i)$ for several iterates, so that some iterated $G^j(\gamma_k)$ hits the local stable manifold $\mathcal{W}^s_{loc}(p_G)$.
\end{proof}

\begin{rmk}
We would like to point out that the IFS $<f_0,f_1,f_2>$ is used to build the blender for $F$ and to guarantee that every local strong unstable leaf $\mathcal{F}^u_{loc}(p)$ (``vertical'' interval) has some iterate that intersects the blender. In consequence, these properties are used to extend the same properties for all $G$ sufficiently close to $F$ and for all $u$-arc $\gamma$.
\end{rmk}

Additionally, note that as $F$ is expanding outside of the blender and 
the closure of  $\mathcal{W}^u(p_G, G)$ has nonempty interior for $G$ nearby $F$, for further details see \cite[Proposition 3.2]{He-Gan}. We can assume, up to shrinking $\mathcal{U}_F$, the following statement.

\begin{lemma}\label{u-lemma}
The image of the closure of $\mathcal{W}^u(p_G,G)$ by a large iterate of $G$ is the whole surface $\mathbb{T}^2$, for every $G \in \mathcal{U}_F$.  
\end{lemma}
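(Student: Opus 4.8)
The plan is to combine the blender property with the expanding behavior of $F$ outside the blender. We want to show that for $G$ near $F$, some iterate $G^N$ maps $\overline{\mathcal{W}^u(p_G,G)}$ onto all of $\mathbb{T}^2$.

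First I would invoke \cite[Proposition 3.2]{He-Gan}: since $(\mathcal{Q},G)$ is a blender for every $G\in\mathcal{U}_F$, the closure $\overline{\mathcal{W}^u(p_G,G)}$ contains a nonempty open set $U_G$, and this can be taken to vary continuously (in particular, there is a fixed open ball $B\subset U_G$ of radius $r_0>0$ for all $G$ in a possibly smaller $\mathcal{U}_F$). The key observation is that $\overline{\mathcal{W}^u(p_G,G)}$ is forward invariant, i.e. $G(\overline{\mathcal{W}^u(p_G,G)})\subseteq\overline{\mathcal{W}^u(p_G,G)}$, because $p_G$ is a fixed point and $G(\mathcal{W}^u(p_G,G))\subseteq\mathcal{W}^u(p_G,G)$ by definition of the unstable set, and $G$ is continuous. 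Hence it suffices to show that $G^N(B)=\mathbb{T}^2$ for some $N$ and some open ball $B$ of a definite size, uniformly in $G\in\mathcal{U}_F$.

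The next step is to show that forward iterates of any small ball eventually cover $\mathbb{T}^2$. Here I would use that $F$ is expanding outside the blender: away from the horizontal stripe $\mathcal{R}_J$ the map $F$ coincides with the linear expanding endomorphism $L$ (acting as $f_0$ in the first coordinate), and inside $\mathcal{R}_J$ the cone field $\mathcal{C}^u_\alpha$ is still preserved with expansion $\lambda'>1$ in the unstable direction (Lemma~\ref{open-conefields}), while in the stable direction the only contraction occurs on the box $I\times J$. A clean way to argue: take a $u$-arc $\gamma\subset\overline{\mathcal{W}^u(p_G,G)}\cap B$; by the Corollary its length grows like $(\lambda')^n$, so after finitely many steps $G^n(\gamma)$ crosses $\mathcal{R}_i$, and by Proposition~\ref{lambda-lemma} a further iterate contains an almost-vertical arc close to some full strong unstable leaf $\{x\}\times J_i$, whose $F$-image contains $\{f_i(x)\}\times J$. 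Iterating the IFS $\langle f_0,f_1,f_2\rangle$ in the first coordinate and using property (4) (density of the forward $f_0$-orbit of $\beta_i$ in $\mathbb{S}^1$, hence density of $\mathcal{O}(x)$ away from the blender), the family of vertical segments $\{h(x)\}\times J$ becomes $\varepsilon$-dense in $\mathbb{T}^2$ for $|h|$ bounded by a uniform $N$; simultaneously the vertical expansion by $\lambda$ fills each $J_i$ up to all of $\mathbb{S}^1$ in finitely many steps. Therefore $G^N(B)$, and a fortiori $G^N(\overline{\mathcal{W}^u(p_G,G)})$, is $\varepsilon$-dense; pushing one more uniformly bounded number of iterates and using that $\overline{\mathcal{W}^u(p_G,G)}$ is closed and $G$-invariant while $G$ is (locally, off the critical set, and also through the folds) an open map on the relevant pieces, we upgrade $\varepsilon$-density to the full torus. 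Finally, since the count $N$ is uniform over $\mathcal{U}_F$ (all estimates — cone expansion $\lambda'$, word length in the IFS, number of vertical-expansion steps — are robust), we shrink $\mathcal{U}_F$ if necessary so that everything above holds for every $G\in\mathcal{U}_F$, yielding $G^N(\overline{\mathcal{W}^u(p_G,G)})=\mathbb{T}^2$.

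The main obstacle, and the point requiring the most care, is the passage from ``$G^N(\overline{\mathcal{W}^u(p_G,G)})$ is $\varepsilon$-dense'' to ``$G^N(\overline{\mathcal{W}^u(p_G,G)})=\mathbb{T}^2$'': because $G$ has critical points, $G$ is not an open map everywhere, so one cannot naively say that the image of a set with nonempty interior has nonempty interior, nor that a dense invariant image is everything. The resolution is that the critical points were created \emph{far from the blender} and away from the stripes $\mathcal{R}_i$, so on the region controlled by the blender and by the almost-vertical leaves $G$ behaves like the linear expanding map (or a local diffeomorphism preserving the cone field), and there the image of a set containing a genuine $2$-dimensional ball of definite radius is again such a set — it is this uniform ``fat'' piece inside $\overline{\mathcal{W}^u(p_G,G)}$ (guaranteed by \cite[Proposition 3.2]{He-Gan}) that one iterates, rather than iterating the whole unstable set blindly. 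Once that fat ball is followed forward and shown to spread over the torus via the expanding dynamics off the critical region, invariance of $\overline{\mathcal{W}^u(p_G,G)}$ closes the argument.
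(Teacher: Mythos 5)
The paper does not actually prove this lemma: it is preceded only by the one-sentence remark that $F$ is expanding outside the blender and that $\overline{\mathcal{W}^u(p_G,G)}$ has nonempty interior by \cite[Proposition 3.2]{He-Gan}, and the statement is then ``assumed'' after shrinking $\mathcal{U}_F$. So you are supplying detail the paper omits, and your route (iterate a $u$-arc inside a ball of definite size, make it almost vertical by Proposition~\ref{lambda-lemma}, spread it horizontally with the IFS $\langle f_0,f_1,f_2\rangle$) is a reasonable and more explicit mechanism than the paper's bare ``expanding outside the blender.'' That part is fine.

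However, the final step has a genuine gap. You establish that $G^N(B)$ contains an $\varepsilon$-dense union of $u$-arcs, and then write that ``pushing one more uniformly bounded number of iterates and using that $G$ is \ldots\ an open map on the relevant pieces, we upgrade $\varepsilon$-density to the full torus.'' This does not work as stated: an $\varepsilon$-dense union of one-dimensional arcs remains an $\varepsilon$-dense (perhaps denser) union of arcs after a bounded number of further iterates, not a set with interior covering $\mathbb{T}^2$; and the appeal to openness is unnecessary and not globally available once critical points are present. What actually closes the argument is much simpler and you almost say it: set $K:=\overline{\mathcal{W}^u(p_G,G)}$. Since $\mathcal{W}^u(p_G,G)=\bigcup_{n\ge0}G^n(\mathcal{W}^u_{loc}(p_G,G))$ with $G(\mathcal{W}^u_{loc})\supseteq\mathcal{W}^u_{loc}$, one has $G(\mathcal{W}^u)=\mathcal{W}^u$, and since $G$ is a closed map on the compact torus, $G(K)=K$. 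Thus $K\supseteq\bigcup_{n\ge0}G^n(B)$, and your density computation shows this union is dense; being closed, $K=\mathbb{T}^2$, and then $G^N(K)=K=\mathbb{T}^2$ for every $N$. In other words, you should use closedness and full invariance of $K$ to absorb the dense union directly, rather than trying to manufacture coverage by a finite iterate of an $\varepsilon$-dense set via openness. With that replacement your proof is correct and self-contained, arguably more informative than the paper's remark.
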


\subsubsection{Creating critical points}
We make here a local perturbation to create artificially the critical points of all endomorphisms in $\mathcal{U}_F$. Artificially means that the critical points will be created in such a way that all the previous properties obtained so far are preserved.


Let $B_{t,s}$ be a box centered at the origin with sides of length $t$ and $s$ respectively, with $t,\, s>0$ small enough, and two $C^{\infty}$ maps $\psi :\R\to [0,\mu+\frac{1}{2}]$ and $\varphi:\mathbb{R}\to \mathbb{R}$ such as in Figure~\ref{psi-varphi} verifying:
\begin{itemize}
	\item $\psi(x)=\psi(-x)$ and $\mu< \psi (0)<\mu+\frac{1}{2}$;
	\item $\varphi(0)=0, \,\min\{\varphi'\}\geq-\frac{\lambda-\mu}{\mu+1}$, and $\max\{ \varphi'\}=1$.
\end{itemize}

\begin{figure}[!h]
\centering
\includegraphics[scale=0.5]{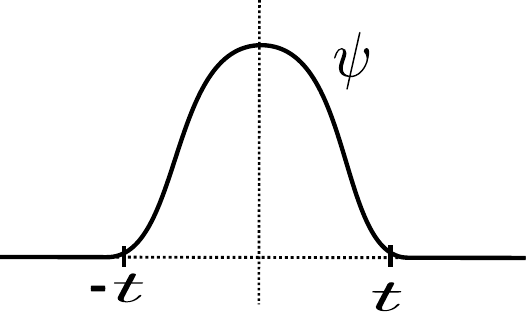}\hspace{1cm}
\includegraphics[scale=0.6]{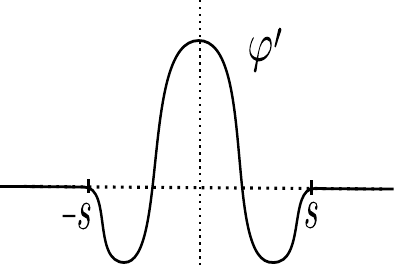}
\caption{The graphs of $\psi$ and $\varphi'$, respectively\protect\footnote.}
\label{psi-varphi}
\end{figure}
\footnotetext{The graph and conditions of $\varphi'$ are slightly different from the one in \cite{ILP}, since the one used in \cite{ILP} does not satisfy the properties needed to preserve the weak hyperbolic structure.}
Given $(x_0,y_0) \in \mathbb{T}^2$, we define $F_{t,s}:\mathbb{T}^2\to \mathbb{T}^{2}$ by 
$$F_{t,s}(x,y)=(f_y(x) - \Phi (x,y),\lambda y ({\rm{mod}}\,\, 1)),$$
where $\Phi(x,y)=\varphi(x-x_0)\psi(y-y_0)$ for every $|x-x_0|<s$, $|y-y_0|<t$; and $\Phi(x,y)\!=\!0$, otherwise. 
Adapting the arguments used in \cite[Lemma 2.1.1]{ILP} follows that, for $t,s$ small enough, $F_{t,s}$ still preserves the weak hyperbolicity structure, that is the unstable cone family is preserved.


Choose $(x_0,y_0) \in~\mathbb{T}^2$ such that $B_{t,s}$ centered at $(x_0,y_0)$ is away from the stripe $\mathcal{R}_J$, this ball is disjoint from the blender region and, in consequences, the previous lemmas hold for $F_{t,s}$. Note that the \textit{critical points} of $F_{t,s}$ are given by 
\begin{align}
\mathrm{Cr}(F_{t,s})=\{(x,y)\in \mathbb{T}^2: \mu-\psi(y-y_0)\varphi'(x-x_0)=0\}.
\end{align}
Since $F$ is a $C^1$-endomorphism having points on which the determinant of $DF$ is negative (e.g., in $(x_0,y_0)$) and positive, we have persistence of the critical points in the $C^1$-topology.

\begin{rmk}\label{rmk0}
Note that  $|\varphi|$  goes to zero as $s$ goes to zero. Hence, it holds that $F_{t ,s}$ goes to $F$ in the $C^{0}$ topology, when  $t$ and $s$ go to zero. In particular, since $F$ was constructed $C^0$-close to $L$, it follows that $F_{t ,s}$ is $C^0$-close to $L$. Hence,  $F_{t,s}$ is homotopic to $L$.
\end{rmk}

From now on, by slight abuse of notation, we fix $t,s$ small enough, and call $F=F_{t,s}$. Finally, we prove that $F$ is robustly transitive, and therefore, satisfies 
Theorem~\ref{thm A}.

\subsubsection{Proof of Theorem \ref{thm A} for the periodic case}

Let $U$ and $V$  two open sets in $\mathbb{T}^2$ and $G\in \mathcal{U}_F$. By Lemma~\ref{stable intersects} and Lemma~\ref{u-lemma}, there are an iterate of an $u$-arc $\gamma$ contained in $U$ which intersects $\mathcal{W}^s_{loc}(p_G,G)$ and some iterate of the closure of $\mathcal{W}^u_{loc}(p_G,G)$ which intersects $V$. Next, we take a ``horizontal'' arc $\beta$ contained in some preimage of $V$ that hits $\mathcal{W}^u_{loc}(p_G, G)$ and is transversal to the unstable cone. Finally, since $p_G$ is a hyperbolic fixed point, we conclude that $G^n(\gamma)$ has a component $\gamma_n$ which is ``almost vertical'' and is getting close to $\mathcal{W}^u_{loc}(p_G,G)$, hitting $\beta$. Therefore, $G$ is transitive and this concludes the proof for the periodic case.    


\subsection{Irrational Case}\label{IC}

From now on, we assume that the eigenvalues $\mu$ and $\lambda$ are irrational with $1 < |\mu| \ll |\lambda|$, recalling that $1<|\mu|\ll |\lambda|$ means that $|\lambda|$ is larger enough than $|\mu|$
such that the correspondent eigenspaces generate a dominated splitting. Let $v^c$ and $v^u$ be the unit eigenvector of $L$ associated to the eigenvalues $\mu$ and $\lambda$, respectively. Recall  that the eigenspaces $E^c$ and $E^u$ are generated by $v^c$ and $v^u$ and the leaf $\mathcal{F}^{\sigma}(q)$ is the line $\{q+tv^{\sigma}:t \in \mathbb{R}\}$ on $\mathbb{T}^2$, for every $q \in \mathbb{T}^2$ and $\sigma=c,u$. Given $q \in \mathbb{T}^2$, denote by $\mathcal{F}^{\sigma,\pm}(q)$ the connected components of $\mathcal{F}^{\sigma}(q)\backslash \{q\}$, that is, $\mathcal{F}^{\sigma,+}(q)$ is the semi-line $\{q+tv^{\sigma}:t\geq 0\}$, similarly for $\mathcal{F}^{\sigma,-}(q)$ with $t\leq 0$. Analogously, we define $\mathcal{F}^{\sigma,\pm}_{loc}(q)$ as the components of $\mathcal{F}^{\sigma}_{loc}(q)\backslash \{q\}$.

\subsubsection{Sketch of the proof}
Since the proof is slightly more technical than the periodic case, we present here the strategy and main steps of the proof. However, the general idea is the same as for the periodic case.

\smallskip

\noindent
\textit{Step I:} First, we create a blender. Using the density of $\mathcal{F}^{u,+}(p),$  with $p=(0,0),$ on the torus, we make a local deformation of $L$ far away from $p$ in order to create a \textit{cycle} at $p$, let us call by $F$ the perturbed map;
that is, 
we find an interval $I$ in $\mathcal{F}^{u,+}_{loc}(p)$ and a positive integer $n$ such that $F^n(I)$ contains $\mathcal{F}^{u,+}_{loc}(p)$. 

\begin{rmk}
It should be noted that for the periodic case, the strong unstable manifold $\mathcal{F}^u(x,y)$ is the closed curve $\{x\}\times \mathbb{S}^1$. Hence, for the periodic case it is not needed to make a deformation to create a cycle, since any interval $I$ in $\mathcal{F}^u(x,y)$ has an iterate by $L$ which is the whole $\mathcal{F}^u(x,y)$.
\end{rmk}

Then, we perturbe $F$ in a small neighborhood of $p$ where $F$ coincide with $L$, so that the expanding fixed point $p$ of $L$ becomes a hyperbolic saddle point. This new map is called ``\textit{Derived from Anosov}'' of $F$, DA for short. This perturbation is done in such a way that the DA-endomorphism, which by abuse of notation we also denote by $F$, preserves the cycle created previously. In addition, it also preserves the unstable cone field $\mathcal{C}_{\alpha}^u$ defined in \eqref{coneL}, and $\min \|DF(v)\|\geq \lambda'$ for some $\lambda'>1$ and for every unit vector $v$ in $\mathcal{C}_{\alpha}^u$.

Finally, we get a box $\mathcal{Q}=I \times J$ where $I$ is an interval in $\mathcal{W}^s_{loc}(p,F)$ and $J$ is an interval in $\mathcal{W}^u_{loc}(p,F)$ where $p$ is a common extreme of $I$ and $J$. Next, we choose an interval $J_1 \subseteq J$ containing $p$ which $F(J_1) \supseteq J$; and, using the existence of a cycle for $F$, choose another interval $J_2 \subseteq J$ and $n \in \mathbb{Z}^{+}$ such that $F^n(J_2) \supseteq J$. Furthermore, we show that there is $\ell \in \mathbb{Z}^{+}$ such that $F^{\ell}(x,y)=(f_i(x),\lambda^{\ell} y \, (\mathrm{mod}\,\,1))$ on $(x,y) \in \mathcal{R}_i=I\times J_i$, where $i=1,2,$ and $f_1,f_2:I \to I$ are as Figure~\ref{IFS-blender-2}. Then, we conclude that $(\mathcal{Q}, F)$ is a blender and, therefore, a blender for every $G$ close enough to $F$ in the $C^1$-topology. 

\medskip

\noindent
\textit{Step II:} After ensured the existence of a blender, we prove that every $u$-arc has some iterate by $F$ that intersect the blender. Then, we extend this property for every endomorphism $C^1$-close to $F$, using the density of all the leaves of $\mathcal{F}^u$	and the fact that $F$ coincides with $L$ out of the region of perturbation. The rest of the proof follows from adapting the arguments of the periodic case to conclude  Theorem~\ref{thm A} for the irrational case.

\begin{rmk}
Recall that in the periodic case the IFS $<f_0,f_1,f_2>$ is used to guarantee that every $u$-arc has some iterate which intersect the blender.
\end{rmk}

\subsubsection{Step I: creating a blender}
We now prove the first step given in the previous sketch, recalling that $p=(0,0)$ is an expanding fixed point of $L$. 

\begin{lemma}[Cycle]\label{B-irracional}
For every neighborhood $\mathcal{U}$ of the identity in ${\rm{End}}^1(\mathbb{T}^2)$, there exists $h_t \in \mathcal{U}$ such that
\begin{enumerate}[label=$\mathrm{(\alph*)}$]
\item $F_t:=h_t\circ L$ is $C^1$-close to $L$;
\item there exists $I \subset \mathcal{W}^u_{loc}(p,F_t)\backslash\{p\}$ an interval such that $F_t^n(I)$ contains the local unstable manifold $\mathcal{W}^u_{loc}(p, F_t)$. 
\end{enumerate}
\end{lemma}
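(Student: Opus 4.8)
The plan is to exploit the density of the strong unstable leaf $\mathcal{F}^{u,+}(p)$ on $\mathbb{T}^2$, which holds because $\lambda$ is irrational and the leaf has irrational slope, so that its image wraps densely. First I would fix a small interval $I_0 \subset \mathcal{F}^{u,+}_{loc}(p)\setminus\{p\}$, say $I_0 = \{p + t v^u : t \in [a, b]\}$ with $0 < a < b$ small. Since $L$ expands along $\mathcal{F}^u$ by the factor $|\lambda| > 1$, for every $n$ the iterate $L^n(I_0)$ is a longer subinterval of $\mathcal{F}^{u,+}(p)$, and by density of this leaf there is a first time $n$ and a point $q = L^n(p + a v^u)$ that returns $C^1$-close to $p$ and "above" it along $\mathcal{F}^{u,+}_{loc}(p)$; more precisely $L^n(I_0)$ contains a subarc $\tilde I$ with one endpoint very near $p$ and the other endpoint far enough that $\tilde I \supseteq \mathcal{F}^{u,+}_{loc}(p)$ after we close the small gap. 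The point is that $L^n(I_0)$ overshoots $\mathcal{F}^{u,+}_{loc}(p)$ but misses $p$ itself by a tiny amount $\eta$, because $p$ is fixed and the leaf through $p$ is $L$-invariant, so $L^n$ maps $\mathcal{F}^{u,+}(p)$ into itself; the "gap" $\eta$ comes only from the fact that the left endpoint $p + a v^u$ of $I_0$ is not $p$. So in fact $L^n(I_0) = \{p + t v^u : t \in [|\lambda|^n a, |\lambda|^n b]\}$ along the leaf, and once $|\lambda|^n a$ is still less than the length of $\mathcal{F}^{u,+}_{loc}(p)$ while $|\lambda|^n b$ exceeds it, we already have $L^n(I_0) \supseteq \mathcal{F}^{u,+}_{loc}(p)$ with $L$ alone — no perturbation needed — which would make the lemma trivial. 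This cannot be the intended content, so the subtlety must be elsewhere.

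Reading the statement again, the role of the perturbation $h_t$ is to \emph{create} a genuine cycle that will \emph{survive} the later DA-perturbation near $p$ and, crucially, be realized by a map $F_t = h_t \circ L$ that still coincides with $L$ in a neighborhood of $p$ (so that the DA step can be performed there) and coincides with $L$ \emph{along} $\mathcal{F}^{u}_{loc}(p)$ near where the cycle lands. The actual argument I would run is: take $\mathcal{U}$ a neighborhood of the identity; pick a fundamental-domain-sized disc $D$ far from $p$ through which the leaf $\mathcal{F}^{u,+}(p)$ passes transversally many times (density); choose $h_t$ supported in $D$, $C^1$-close to the identity, that pushes a well-chosen strand of $\mathcal{F}^{u,+}(p)$ so that after one more application of $L$ the leaf segment is steered to pass exactly over $p$, i.e. so that some forward iterate of a subinterval $I \subset \mathcal{F}^{u,+}_{loc}(p)\setminus\{p\}$ under $F_t = h_t \circ L$ contains $\mathcal{F}^{u,+}_{loc}(p)$. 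Here I would use that $\mathcal{F}^{u,+}(p)$ accumulates on $p$ along the weak unstable direction, so a $C^1$-small push in $D$ transverse to the leaf translates, after $L^k$ for large $k$, into a controlled macroscopic adjustment of where the strand sits relative to $p$; choosing $t$ (the size/shape of the push) appropriately makes the adjusted strand cover $\mathcal{F}^{u,+}_{loc}(p)$. Keeping $h_t$ supported away from $p$ guarantees $F_t = L$ near $p$, so $\mathcal{W}^u_{loc}(p,F_t) = \mathcal{F}^{u,+}_{loc}(p)$ and item (b) reads off directly; item (a) is immediate since $h_t \in \mathcal{U}$ with $\mathcal{U}$ arbitrarily small.

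The main obstacle, and the step I would spend the most care on, is the \emph{controlled steering}: arranging that the $C^1$-small, compactly-supported $h_t$ produces, after composing with $L$ and iterating, a leaf segment that lands \emph{exactly onto} $\mathcal{F}^{u,+}_{loc}(p)$ rather than merely near it. This is where one must use the irrationality of $\lambda$ (density gives infinitely many near-returns, hence enough freedom to hit the target) together with a continuity/intermediate-value argument in the parameter $t$: as $t$ varies over a small interval, the "displacement" of the relevant strand from $p$ varies continuously and changes sign, so some $t$ yields the exact cycle. I would also need to check that the support $D$ can be taken disjoint from $\mathcal{F}^{u}_{loc}(p)$ and from the eventual box $\mathcal{Q}$, which again follows from density by choosing the strand of $\mathcal{F}^{u,+}(p)$ passing through $D$ to be a \emph{late} one (large return time), far in the leaf from its initial segment. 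The remaining verifications — that $F_t$ is $C^1$-close to $L$, that $I$ is an honest subinterval of $\mathcal{W}^u_{loc}(p,F_t)\setminus\{p\}$, and that $n$ is finite — are then routine consequences of the construction.
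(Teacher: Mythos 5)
Your second and third paragraphs land on the paper's actual strategy: perturb $L$ by a shear $h_t$ supported in a small region far from $p$, use the density of $\mathcal{F}^{u,+}(p)$ to find a strand crossing the support, push that strand transversally to the leaf, amplify the push by further iterations (expansion along $E^c$), and close the cycle by an intermediate-value argument in $t$; keeping the support away from $p$ gives $F_t=L$ near $p$, so $\mathcal{W}^u_{loc}(p,F_t)=\mathcal{F}^u_{loc}(p)$. This is exactly what the paper does (the paper's $h_t$ is a translation along $v^c$ supported in an $E^c\times E^u$--aligned rectangle $\mathcal{R}$, with a corner of $\mathcal{R}$ placed on $L^{-1}(p)$; your ``disc $D$'' would need to be aligned similarly for the $v^c$-translation to make sense).

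Two issues, one cosmetic and one substantive. Cosmetic: your first paragraph claims the lemma ``would be trivial'' once $|\lambda|^n a<\delta<|\lambda|^n b$, but this is incorrect -- since the left endpoint $p+av^u$ of $I_0$ has positive leaf-parameter $a>0$ and $L$ fixes $p$ while expanding $\mathcal{F}^{u,+}(p)$ into itself, $L^n(I_0)$ always has leaf-parameter $\geq |\lambda|^n a>0$ and so never contains $p$; hence $L^n(I_0)\not\supseteq\mathcal{F}^{u,+}_{loc}(p)$ for any $n$. You retreat from this claim, but without identifying the error, and this is precisely the observation that makes the lemma nontrivial and motivates the perturbation (the paper opens with it). Substantive: when you say the push ``translates, after $L^k$ for large $k$, into a controlled macroscopic adjustment,'' you are implicitly assuming the pushed strand's intermediate iterates do not re-enter the support of $h_t$; otherwise $F_t=h_t\circ L$ applies the shear again along the way and the displacement-as-a-function-of-$t$ is no longer controlled, which would break the intermediate-value argument. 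The paper handles this with a dedicated claim (Claim I in its proof): it extracts a sequence of strong-unstable intervals in $\mathcal{R}$ whose $L$-iterates miss $\mathcal{R}$ at all times strictly between consecutive returns. Your proposal checks that $D$ is disjoint from $\mathcal{F}^u_{loc}(p)$ and from $\mathcal{Q}$, but that is a different issue; you would still need to rule out intermediate hits of the steered strand on $D$ to make the amplification-plus-IVT step rigorous.
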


\begin{proof}
Observe that if there are a positive integer $n$ and an interval $I$ contained in $\mathcal{F}^u_{loc}(p)\backslash\{p\}$ such that $L^n(I)$ contains the local unstable manifold $\mathcal{F}^u_{loc}(p)$ of $L$, then there is nothing to prove and $L$ verifies the lemma. Hence, let us assume that $L^n(\mathcal{F}^u_{loc}(p)\backslash\{p\})$ does not contain $p$ for every positive integer $n$.

Let us prove item (a). Note that given $q$ on the torus, we may rewrite $L$ locally as $L(x,y)=q+(\mu x, \lambda y)$ for every $(x,y)$ on the rectangle $I^c_{r_1}\times I^u_{r_2}$, 
where $I^{\sigma}_r$ is an interval on $E^{\sigma}$ centered at the origin and radius $r$, for $\sigma=c,u$.  Let $\varphi:\mathbb{R} \to \mathbb{R}$ be a bump function such that $\varphi(s)=1$, if $|s|\leq 3/4$, and $\varphi(s)=0$, if $|s|\geq 1$.  The projection of the rectangle  $I^c_{r_1} \times I^u_{r_2}$ centered at the origin defined on the tangent bundle, by slightly of abuse, is denoted by  $I^c_{r_1} \times I^u_{r_2},$ observing that now it is a rectangle centered at $q$ on the torus.
Then, define $h_t:\mathbb{T}^2 \to \mathbb{T}^2$ as $h_t = Id$ outside of the rectangle $I^c_{r_1} \times I^u_{r_2}$ centered at $q$; and define $h_t$ on $I^c_{r_1} \times I^u_{r_2}$ as the following map, 
\begin{align}\label{Pert-Id}
h_t(q+ (x,y))=q+(x +t\Phi(x,y), y),\, \forall (x,y)\in   I^c_{r_1} \times I^u_{r_2},
\end{align}
where $\Phi(x,y)=\varphi(x/{r_1})\varphi(y/r_2)$. For simplicity, we will omit the point $q$ in \eqref{Pert-Id}. Note that $h_t$ deforms the strong unstable leaf that crosses the rectangle on the $v^c$-direction, and has the form $h_t(x,y)=(x+t\varphi(x/r_1),y)$ for every $(x,y) \in I_{r_1}^c\times I^u_{2r_2/3}$. Denote by $\mathcal{R}$ the rectangle $I_{r_1}^c\times I^u_{2r_2/3}$ 
 and notice that $h_t$ translates interval in $\mathcal{R}$ parallel to the strong unstable direction. Fix $r_1$ and $r_2$ small enough such that for $0<t<r_1$, $h_t$ is $C^1$-close to the identity and $F_t=h_t \circ L$ is $C^1$-close to $L$. It concludes the proof of item~(a). 



The local translation $h_t$ is supported in the region $\mathcal{R}$ that will be stablished in Claim II below. 

\medskip

\noindent
\textit{Claim I:} There exists a sequence $(I^u_j)_j$ of strong unstable intervals contained in $\mathcal{F}^{u,+}(p)\cap \mathcal{R}$ with the following properties:
\begin{enumerate}[label=$\mathrm{(\roman*)}$]
\item there exists $(n_j)_j$ positive integers so that $I_j^u \subseteq L^{n_{j}}(I^u_{j-1})$ for each $j \geq 1$; 
\item for each $1\leq  k <n_j$, the interval $L^k(I^u_{j-1})$ does not intersect  $\mathcal{R}$.
\end{enumerate}

\smallskip

Recall that any internal $I$ contained in some strong unstable leaf of $\mathcal{F}^u$ verifies that the length of $L^n(I)$ grows exponentially and got dense on the whole torus. Thus, we can take a positive integer $n_0$ as the first time that $L^{n}(\mathcal{F}^{u,+}_{loc}(p))$ crosses the rectangle $\mathcal{R}$. Denote the component of $L^{n_0}(\mathcal{F}^{u,+}_{loc}(p))$ that crosses the rectangle $\mathcal{R}$ by $I_0^u$. We now observe that $L^n(I^u_0)$ also get dense as $n$ increase. Again, we can choose $n_1$ as the first time that $L^n(I^u_0)$ crosses $\mathcal{R}$; and denote by $I_1^u$ such component. It is possible because $L^n(I_0^u)$ is getting dense on the torus by parallel segments and so if $L^n(I_0^u)$ hits on $\mathcal{R}$, but does not cross it, then we remove the intersection $L^n(I_0^u)\cap \mathcal{R}$ and keep iterating $L^n(I_0^u)\backslash (L^n(I_0^u)\cap \mathcal{R})$ until it crosses the rectangle $\mathcal{R}$ for the first time. Repeating that process indefinitely, we obtain a sequence $(I^u_j)_j$ in $\mathcal{R}$ such that $I^u_j$'s are contained in $\mathcal{F}^{u,+}(p)$ and verify $I^u_j$ contained in $L^{n_j}(I_{j-1}^u)$ for every $j\geq 1$. Furthermore, by the choices of $n_j$'s, we have that $L^k(I^u_j)$ does not hit the region $\mathcal{R}$ for each $1\leq k < n_j$. This concludes the claim.

\smallskip

\noindent
\textit{Claim II:} Given $\varepsilon >0$, there is $n\geq 1$ and an interval $I \subseteq \mathcal{F}^{u,+}_{loc}(p)\backslash\{p\}$ so that $F_t^n(I) \subseteq \mathcal{F}^u_{loc}(p)$ containing $p$, for some $0< t \leq \varepsilon$.

\smallskip

Since $F_t$ coincides with $L$ outside of the rectangle $I^c_{r_1}\times I^u_{r_2}$, we get that \linebreak
$\mathcal{W}^u_{loc}(p,F_t)=\mathcal{F}^u_{loc}(p)$. Then, Claim II allows us to conclude the proof of item (b), and so, complete the proof of Lemma~\ref{B-irracional}.

Let us now fix the center of the rectangle
$I^c_{r_1}\times I^u_{r_2},$ that is fix $q$ such that
$q_1=q+(r_1,0)$ is the center of the vertical right boundary of the rectangle and it belongs to $L^{-1}(p)$. By Claim I, consider a sequence of positive integers $(n_j)_j$ and intervals $(I_j^u)_j$ contained in $\mathcal{R}$ such that $I_0^u$ is contained in $L^{n_0}(\mathcal{F}^{u,+}_{loc}(p))$ and $I^u_j \subseteq L^{n_j}(I^u_{j-1})$ and $L^k(I^u_{j-1})$ does not hit on $I^c_{r_1}\times I^u_{r_2}$ for each $1\leq k < n_j$. Assume without loss of generality that $I^u_j=L^{n_j}(I^u_0)$ for some $n_j$ large enough so that $\mu^{n_j}\varepsilon \geq r_1$.

We are now able to prove Claim II. Since, for each $0< t\leq \varepsilon$, the map $F_t$ is a $v^c$-translation on $I^c_{r_1}\times I^u_{r_2}$ and $F_t=L$ outside of the rectangle $I^c_{r_1}\times I^u_{r_2}$, we have that the image of each leaf $\mathcal{F}^u(x,y)$ by $F_t$ is the leaf $\mathcal{F}^c(L(x,y))$ for every $t$. Moreover, we also have that there is $I\subseteq \mathcal{F}^{u,+}(p)$ such that its $n_0$-iterate by $F_t$ is equal to $h_t\circ L^{n_0}(I),$ that is a $v^c$-translation of $I^u_0$ on $\mathcal{R}$. We denote it by $I_t$ and take the family $\mathcal{R}_0=\{I_t:0\leq t\leq \varepsilon\}$. Define $\mathcal{R}_j$ as the set $\{F^{n_j}_t(I_t): 0\leq t \leq \varepsilon\}$. Then, using that $F_t$ preserves the weak unstable foliation $\mathcal{F}^c$ for every $t$, we have  that $\mathcal{R}_j$ is a family of intervals $(I_t^j)$ for $0\leq t \leq \mu^{n_j}\varepsilon$ contained in $\mathcal{R}$ such that each interval $I^j_t$ is equal to $F^{n_j}_t(I_{t'})$ for some $0\leq t' \leq \varepsilon$. In particular, $I_0^j$ is equal to $I^u_j=L^{n_j}(I_0^u)$.


Therefore, we conclude that there is $I \subseteq \mathcal{F}^{u,+}_{loc}(p)$ and $0<t\leq \varepsilon$ such that $F^{n_0+n_j}_t(I)=F^{n_j}_t(I)$ contains the interval $\{r_1\}\times I^u_{2r_2 /3}$, which implies that $F_t^{n}(I)$ is an interval in $\mathcal{F}^u_{loc}(p)$ containing $p$. This complete Claim II and, consequently, proves that $F_t$ has a cycle.
\end{proof}

Now we get a Derived from Anosov (DA) from $F_t$ in order to create a blender.

\begin{lemma}[Derived from Anosov]
Let $F_t$ be the $C^1$-map given by Lemma~\ref{B-irracional}. There is a DA from $F_t$, denoted by $F$, which has a blender for some iterated $F^n$.
\end{lemma}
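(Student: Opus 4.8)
Starting from the cycle map $F_t$ produced by Lemma~\ref{B-irracional}, the plan is to perform a local $C^1$-perturbation in a small neighborhood $N$ of the expanding fixed point $p=(0,0)$ where $F_t$ coincides with $L$, turning the expanding eigenvalue $\mu>1$ (along $E^c$) into a contracting one while leaving the $E^u$-direction essentially untouched. Since $L$ is diagonal on $N$ in the eigenbasis, $L(x,y)=p+(\mu x,\lambda y)$, I would set $F(x,y)=p+(\mu x - t_0\,\phi(x,y),\,\lambda y)$ on $N$ and $F=F_t$ outside, where $\phi$ is a bump function supported in $N$, equal to a suitable affine-like profile near $p$ chosen so that the $x$-derivative $\mu - t_0\partial_x\phi$ becomes a constant $\nu$ with $0<\nu<1$ on a small subinterval of the local stable direction while staying strictly between $0$ and $\mu$ everywhere (so orientation and degree are preserved). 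This is precisely the classical Derived-from-Anosov surgery; the first step is just to write down such a $\phi$ and check that $p$ becomes a hyperbolic saddle whose local stable manifold contains an interval $I\subset E^c$ and whose local unstable manifold is $J\subset E^u$.

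**Key steps, in order.** (1) Construct $F$ as above and verify it is $C^1$ and homotopic to $L$ (it is $C^0$-close to $F_t$, hence to $L$, by Remark~\ref{rmk0}-type reasoning, and the support of the surgery is a small ball). (2) Check that $F$ still preserves the unstable cone field $\mathcal{C}^u_\alpha$ of \eqref{coneL} and that $\|DF(v)\|\ge\lambda'>1$ for unit $v\in\mathcal{C}^u_\alpha$: outside $N$ this is inherited from $F_t$ (which is $C^1$-close to $L$), while inside $N$ the derivative is $\diag(\mu-t_0\partial_x\phi,\lambda)$ with the off-diagonal term zero, so the domination $|\mu-t_0\partial_x\phi|+0<\lambda$ is immediate for $\lambda$ large — this is the point where $1<|\mu|\ll|\lambda|$ is used. (3) Check that the cycle survives: the interval $I\subset\mathcal{F}^{u,+}_{loc}(p)$ and the integer $n$ from Lemma~\ref{B-irracional} with $F_t^n(I)\supseteq\mathcal{F}^u_{loc}(p)$ are unaffected because the orbit realizing the cycle stays in the region where $F=F_t$; one only needs to shrink $N$ so that this orbit avoids $N$ except possibly at $p$ itself. (4) Set up the box $\mathcal{Q}=I\times J$ with $I\subset\mathcal{W}^s_{loc}(p,F)$, $J\subset\mathcal{W}^u_{loc}(p,F)$, choose $J_1\ni p$ with $F(J_1)\supseteq J$ (from the saddle structure) and $J_2\subset J$ with $F^n(J_2)\supseteq J$ (from the cycle), and find $\ell\in\mathbb{Z}^+$ so that $F^\ell(x,y)=(f_i(x),\lambda^\ell y\,(\mathrm{mod}\,1))$ on $\mathcal{R}_i=I\times J_i$ with $f_1,f_2\colon I\to I$ contracting as in Figure~\ref{IFS-blender-2}. (5) Invoke the prototype verification of Section~\ref{sec:prototype} to conclude $(\mathcal{Q},F^N)$ is a blender for the appropriate iterate $N$, hence a blender robustly (by \cite[Proposition 3.2]{He-Gan}).

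**Main obstacle.** The delicate point is step (4)–(5): extracting a genuine \emph{iterate} $F^\ell$ (equivalently $F^N$) that restricts on two subrectangles to the exact skew-product prototype form $(f_i(x),\lambda^\ell y)$. The $J_1$-branch is easy (it is the local saddle dynamics near $p$), but the $J_2$-branch comes from the cycle, and along the $n$ cycle-iterates the map is $F_t$, not $L$, so the first coordinate is only \emph{close} to $\mu^{\text{-power}}\cdot$, not literally affine; one must use the invariance of the weak unstable foliation $\mathcal{F}^c$ under $F$ (established during the cycle construction) together with a further small $C^1$-adjustment, supported away from the blender and from the perturbation regions, to straighten the relevant branch into affine contracting form on $I$ while keeping the vertical dynamics a clean power of $\lambda$. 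Checking that this straightening can be done without destroying the cone field, the cycle, or the homotopy class — i.e. that all the perturbations performed so far are mutually compatible because their supports are pairwise disjoint — is the part that requires the most care; everything else is a routine assembly of the pieces already in place, exactly parallel to the periodic case of Section~\ref{periodic}.
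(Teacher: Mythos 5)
Your plan follows the paper's skeleton closely: perform a local DA surgery at $p$ supported in a ball where $F_t=L$, check that the unstable cone field and the cycle survive, set up $\mathcal{Q}=I\times J$ with $J_1\ni p$ (saddle branch) and $J_2$ (cycle branch), and read off a blender. The paper implements the surgery multiplicatively, $\Theta_s(x,y)=(xe^{-s\xi(x,y)},y)$ with $F_{s,t}=\Theta_s\circ F_t$, while you subtract a bump; these are interchangeable. One small slip: you assert the derivative of your $F$ on $N$ is $\diag(\mu-t_0\partial_x\phi,\lambda)$ ``with the off-diagonal term zero,'' but since $\phi$ depends on $y$ there is a nonzero entry $-t_0\partial_y\phi$ in the upper-right corner; this is harmless for the cone estimate (it is small and dominated by $\lambda-\mu$, and the paper's $\Theta_s$ has the same feature), but the claim as stated is incorrect.

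The genuine divergence is in how you propose to resolve what you correctly identify as the delicate point -- obtaining the prototype form $(f_i(x),\lambda^{\ell}y)$ on $\mathcal{R}_i$, in particular on the cycle branch $\mathcal{R}_2$, where the intermediate iterates pass through the translation region and are not literally diagonal. You suggest inserting a further $C^1$ ``straightening'' perturbation, supported away from the other surgeries, to force the $J_2$-branch into exact affine skew-product form, and you flag the compatibility of supports as the main technical worry. The paper avoids this extra perturbation entirely. Instead it exploits the fact that $F_{s,t}$ preserves the strong unstable foliation $\mathcal{F}^u$ along the relevant orbit segments (this is built in: $L$ is linear, $h_t$ is a pure $v^c$-translation on $\mathcal{R}$, and $\Theta_s$ contracts only the $v^c$-coordinate), and then defines $f_i$ not as the first coordinate of $F^{m+k}$ but as the \emph{induced map on the quotient} $\mathcal{R}_i/\mathcal{F}^u\cong[0,\eta']$, i.e.\ $f_i(x)$ is the $v^c$-position of the leaf $F_{s,t}^{m+k}(\mathcal{F}^u_i(x))$. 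Because the quotient dynamics of $L$, $h_t$ and $\Theta_s$ are respectively linear, a shift, and a contraction, the $f_i$ are automatically affine with explicitly computable slopes (the paper records $|f_1'|=|\mu_s|^{m+k}$ and a product formula for $|f_2'|$), and one simply picks $k$, $s$, $t$ so that both slopes are in $(0,1)$ and $f_1,f_2$ match Figure~\ref{IFS-blender-2}. Thus the ``main obstacle'' you isolate is real, but the paper's resolution is to change what object one linearizes (the quotient map rather than the skew-product first coordinate), not to add another perturbation; your route would need an independent verification that the straightening can be done without disturbing the cycle, the cone field, and the homotopy class, which the paper's quotient argument renders unnecessary.
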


\begin{proof}
Consider a small ball $B$ centered at the fixed point $p=(0,0)$ and radius $r>0$. Fix $0<\eta < r$, define $\xi: \R^2 \to [0,1]$ a bump function such that $\xi(z)=0$ in $B^c$ and $\xi(z)=1$ for $ \|z\|\leq \eta$. Define the family of diffeomorphisms $\Theta_s: \mathbb{T}^2 \to \mathbb{T}^2$ by $\Theta_s(x,y)=(x e^{-s \xi((x,y))},y)$ in the $(v^c,v^u)$-coordinates, if $(x,y) \in B$, and $\Theta_s$ is equal to the identity on $\mathbb{T}^2\backslash B$.
Then, the endomorphism $F_{s,t}$ given by $\Theta_{s}\circ F_t$ satisfies the following properties:
\begin{itemize}
\item $p$ is a hyperbolic fixed point and the derivative of $DF_{s,t}$ at $p$ in the $(v^c,v^u)$-coordinates is a diagonal matrix $DF_{s,t}(p)=\diag(e^{-s}\mu,\lambda)$, denote $e^{-s}\mu$ by $\mu_{s}$;
\item the map $F_{s,t}=F_t$ in $\mathbb{T}^2\backslash B$ which is an expanding map. Furthermore, as $F_t=L$ outside the box $\mathcal{R}$, one has that $F_{s,t}=L$ outside $B\cup \mathcal{R}$;
\item for every $(x,y) \in B$ with $\|(x,y)\|<\eta$, we have that $F_{s,t}(x,y)=(\mu_{s}x, \lambda y)$;
\item we assume that $\mathcal{W}^u_N(p,F_{s,t})=\mathcal{W}^u_N(p,F_{t})$ for every $s \in \mathbb{R}$, where $N>0$ is chosen so that it has a cycle.
\end{itemize}
	
\begin{rmk}\label{iterado-m}
The last item above is possible since $F_t$ admits an interval $I \subseteq \mathcal{W}^u_{loc}(p,F_t)$ such that $F_t^m(I) \supseteq \mathcal{W}^u_{loc}(p,F_t)$ for some $m$, $\mathcal{W}^u_{loc}(p,F_{s,t})$ coincide with the local strong unstable $\mathcal{W}^u_{loc}(p,F_t)$ which is $\mathcal{F}^u_{loc}(p)$  and $B$ can be chosen small enough such that $F^k_t(I)$ does not hit on $B$ for $1\leq k \leq m-1$.
\end{rmk}
	
Let $\mathcal{Q}=[0,\eta']\times [0,\eta']$ be the box in $(v^c,v^u)$-coordinates with $0<\eta'<\eta$ contained in $B$, where $B$ was stablished above. Note that $[0,\eta]\times \{0\}$ is contained in $\mathcal{W}^s_{loc}(p,F_{s,t})$ and $\{0\}\times [0,\eta']$ is contained in $\mathcal{W}^u_{loc}(p,F_{s,t})$. Then, take two disjoint intervals $J_1$ and $J_2$ as folllows,  $J_1 \subseteq \{0\} \times [0,\eta']$ with $p \in \partial J_1$ and $J_2 \subseteq \{0\}\times [0,\eta']$ on which $F^m_{s,t}(J_2) \supseteq \{0\}\times [0,\eta']$ (it is possible that exist a cycle). Moreover, we assume that $F_{s,t}(J_1)$ contains $\{0\}\times [0,\eta']$. In particular, it contains the interval $J_2$.
Let $\mathcal{R}_1$ and $\mathcal{R}_2$ be two stripe in $\mathcal{Q}$ of the form $\mathcal{R}_1=[0,\eta']\times J_{1}$ and $\mathcal{R}_2=[0,\eta']\times J_{2}$, see Figure~\ref{IFS-irrational}; and if necessary we take $\lambda$ large enough to guarantee that $F_{s,t}^k(\mathcal{R}_2)$ does hit on $B$ until the $m$-iterate.
	
For simplicity, denote by $\mathcal{F}^u(x)$ the strong unstable leaf containing $x$ in $\mathcal{Q}$ and by $\mathcal{F}^u_{i}(x)$ the connected component in $\mathcal{R}_i$ for $i=1,2$. Note that, by construction, $F_{s,t}^{m+k}$ preserves the unstable foliation $\mathcal{F}_i^u$ and if $t'>t$ then $F_{s,t'}^{m}(J_2)$ is of the form $\mathcal{F}^u(x)$ in $\mathcal{Q}$ with $0<x<\eta'$, recalling that $m$ was stablished in Remark~\ref{iterado-m}. Hence, $F_{s,t}^{m+k}$ induces a one-dimensional map $f_i:J_i/_\sim \to J_i/_\sim$, where $x\sim y$ if and only if $y\in \mathcal{F}^u(x)$ (it is easy to see that $J_i/_\sim=[0,\eta']$), defining $f_i(x)$ as the leaf $F_{s,t}^{m+k}(\mathcal{F}^u_i(x))$. Moreover, we have that $|f_1'|=|\mu_s|^{m+k}$ and $|f_2'|=|\lambda|^{m}|\mu_s|^k$. Then, we choose $k\geq 1$, $t>0$ and $s>0$ such that $|f'_1|,|f_2'|<1$, and $f_1, f_2$ are as in Figure~\ref{IFS-blender-2}. Denote such $F_{s,t}$ simply by $F$ which has a blender for $F^{m+k}$.
\end{proof}

\begin{figure}[!h]
\centering
\includegraphics[scale=0.8]{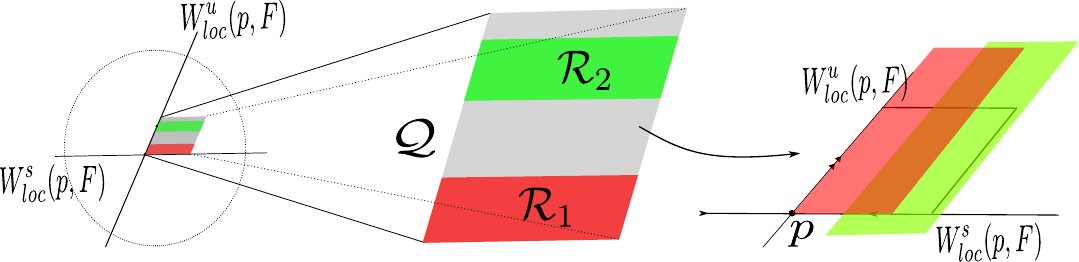}
\caption{The blender for an iterated of $F$.} \label{IFS-irrational}
\end{figure}

From now on, denote by $\mathcal{U}_F$ the neighborhood of $F$ where $(\mathcal{Q}, G)$ is a blender for every $G\in \mathcal{U}_F$. Observe that the endomorphism $F:\mathbb{T}^2 \to \mathbb{T}^2$ is a DA-endomorphism from a $C^0$-perturbation of $L$ such that  it coincides with the linear endomorphism $L$ away from the support. It is easy to check that by construction one can assume that every endomorphism $G \in \mathcal{U}_F$ has the hyperbolic structure preserving the cone field $\mathcal{C}^u_{\alpha}$ and verifying $\min\|DG(v)\|\geq \lambda'>1$.

\subsubsection{Step II: crossing the blender}
Here, we guarantee that every $u$-arc $\gamma$ admits an iterate by $G^n$ which cross the blender. More precisely,
\begin{lemma}\label{lemma:crossingblender}
There is a neighborhood $\mathcal{U}_F$ of $F$ such that for every $G \in \mathcal{U}_F$ and every $u$-arc $\gamma$, there exists a component $\hat{\gamma}$ of $G^n(\gamma)$ which cross the blender $\mathcal{Q}$ and intersect $\mathcal{W}^s_{loc}(p_g,G)$.
\end{lemma}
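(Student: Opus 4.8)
The plan is to mirror the periodic-case argument, but with the IFS dynamics replaced by the density of the strong unstable foliation $\mathcal{F}^u$ of $L$ together with the fact that $F$ agrees with $L$ outside the (small) support $B\cup\mathcal{R}$. First I would fix the blender box $\mathcal{Q}=I\times J\subset B$ with $p=(0,0)$ the hyperbolic saddle, and recall from the construction that the unstable cone field $\mathcal{C}^u_\alpha$ is $DG$-invariant and strictly expanding with factor $\lambda'>1$ for every $G\in\mathcal{U}_F$ (shrinking $\mathcal{U}_F$ if needed). Consequently, for any $u$-arc $\gamma$ the lengths $\ell(G^n(\gamma))\geq(\lambda')^n\ell(\gamma)$ grow without bound while the iterates stay tangent to $\mathcal{C}^u_\alpha$, so after finitely many steps some component of $G^n(\gamma)$ is a long $u$-arc. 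I would then pass to the linear model: up to a first iterate we may assume $\gamma$ lies in the region where $G$ is $C^1$-close to $L$, and since $L$ acts on $\mathcal{F}^u$ by expansion with dense leaves, a suitable component $\hat\gamma_0$ of $G^{n_0}(\gamma)$ is $C^1$-close to a long piece of a strong unstable leaf of $L$ that crosses $\mathcal{Q}$ transversally to the weak unstable foliation $\mathcal{F}^c$.

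Next I would run the analogue of Proposition~\ref{lambda-lemma}: because $F$ coincides with $L$ outside $B\cup\mathcal{R}$ and preserves $\mathcal{F}^c$ there, and because inside the blender $F^\ell$ has the skew form $(f_i(x),\lambda^\ell y)$ on $\mathcal{R}_i=I\times J_i$, any $u$-arc that enters $\mathcal{R}_i$ has a forward component whose slope relative to $\mathcal{F}^c$ contracts geometrically, so after enough iterates inside $\mathcal{R}_i$ it becomes $\tau$-$C^1$-close to the ``vertical'' leaf $\mathcal{F}^u_i$. Applying this (for $G$ in a possibly smaller $\mathcal{U}_F$, using continuity of the estimate) I obtain a component $\hat\gamma$ of some $G^n(\gamma)$ that crosses $\mathcal{Q}$ and is almost parallel to $J$. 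Since the local stable manifold $\mathcal{W}^s_{loc}(p_G,G)$ contains (a $C^0$-small perturbation of) the horizontal interval $I$ by Remark~\ref{s-mfld} and depends continuously on $G$, an almost-vertical arc crossing $\mathcal{Q}$ must intersect it; this gives $\hat\gamma\cap\mathcal{W}^s_{loc}(p_G,G)\neq\emptyset$ and completes the lemma.

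The main obstacle is the transition from the single map $F$ (where density of $\mathcal{F}^u$ and exact agreement with $L$ make the crossing automatic, exactly as in Remark~\ref{iterado-m} and Claim~I of Lemma~\ref{B-irracional}) to an arbitrary $C^1$-nearby $G$, for which $\mathcal{F}^u$ is no longer a literal foliation and $G$ is only $C^1$-close to $L$ off the support. The fix is the standard $\lambda$-lemma / cone-field argument already used in the periodic case: first establish, for $F$ itself, that a fixed finite number $n$ of iterates sends every $u$-arc across $\mathcal{Q}$ and into $\mathcal{W}^s_{loc}(p_F,F)$ (using compactness of the space of ``directions'' of $u$-arcs and the density of $\mathcal{F}^u$-leaves of $L$ to get a uniform $n$), and then observe that $G^j$ is $C^0$-close to $F^j$ for $j\leq n$ on the relevant region, so that crossing and the intersection with $\mathcal{W}^s_{loc}(p_G,G)$ persist after shrinking $\mathcal{U}_F$. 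I would isolate the uniform-$n$ statement for $F$ as the technical heart, exactly paralleling the passage from Proposition~\ref{minimal-ISF} to Lemma~\ref{stable intersects} in the periodic case.
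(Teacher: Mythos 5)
Your plan follows the same route as the paper: use the invariance and expansion of the cone field $\mathcal{C}^u_\alpha$ to make any $u$-arc long, use minimality (density) of the linear strong unstable foliation $\mathcal{F}^u$ to get crossing of $\mathcal{Q}$ once a fixed length threshold $K$ is reached, then adapt Proposition~\ref{lambda-lemma} to straighten the arc inside $\mathcal{R}_i$ and import the compactness/perturbation argument from Lemma~\ref{stable intersects} to hit $\mathcal{W}^s_{loc}(p_G,G)$. That is exactly the paper's outline, so the core is correct.

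Two small precision points. First, in the last paragraph you ask for ``a fixed finite number $n$ of iterates \emph{sends every $u$-arc} across $\mathcal{Q}$'' for $F$; as stated this cannot be uniform, since a $u$-arc of length $\varepsilon$ needs on the order of $\log(K/\varepsilon)$ iterates before it even reaches length $K$. The uniformity you want (and what the paper actually uses, following Lemma~\ref{stable intersects}) is over a \emph{compact family} of arcs of length bounded below --- e.g.\ leaf segments of a fixed length, or the family $\{x\}\times J_i$ in the periodic case. General $u$-arcs are handled by first iterating until the length exceeds $K$ (non-uniformly), and only then applying the uniform bound. Second, the claim that a long component of $G^{n_0}(\gamma)$ is ``$C^1$-close'' to a leaf of $\mathcal{F}^u$ is too strong; tangency to the narrow cone $\mathcal{C}^u_\alpha$ gives $C^0$-closeness of a length-$K$ subarc to a leaf segment (drift bounded by roughly $\alpha K$), which is all that is needed for crossing, while $C^1$-closeness only emerges after the $\lambda$-lemma iterates inside $\mathcal{R}_i$. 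Neither point changes the structure of your argument; they just tighten the phrasing to what the paper's proof actually delivers.
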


The idea of the proof follows from the following steps. First, we observe that the strong unstable foliation $\mathcal{F}^u$ is minimal, that is, every leaf of  $\mathcal{F}^u$ is dense. Then, fix a large number $K$ such that every segment on $\mathcal{F}^u$ whose length is greater than $K$ cross the blender. Moreover, we can fix $\alpha>0$ small enough such that any arc $\gamma$ of length greater than $K$ tangent to the cone-field $\mathcal{C}_{\alpha}$ also crosses the blender. Thus, now we may take a neighborhood $\mathcal{U}_F$ of $F$ such that every endomorphism $G$ in it also preserves the cone-field $\mathcal{C}_{\alpha}$ and every $u$-arc $\gamma$ has some iterated by $G$ which crosses the blender. Since $F$ restricted to $\mathcal{R}_i$ has the form $F(x,y)=(f_i(x),\lambda y)$, we can adapt Proposition~\ref{lambda-lemma} for this setting. Finally, we repeat the same arguments as in the proof of Lemma~\ref{stable intersects} to conclude the lemma.

\smallskip

Now we are able to prove the robustness of transitivity for $F$. 
\subsubsection{Proof of Theorem \ref{thm A} for the irrational case} 

The proof follows from the same arguments as in the periodic case. We put artificially the critical points as in the previous case in such way those lemmas in Steps I and II keep holding. Thus, one can conclude the proof of Theorem \ref{thm A} repeating the same arguments done for the periodic case. \qed


\section*{Acknowledgments}
The authors are grateful to E. Pujals and R. Potrie for insightful comments to improve this work. 
The first author would like to thank to UFBA and the second author to UFAL and ICTP for the nice enviroment and support during the preparation of this work. 



%

\end{document}